\renewcommand{\thetheoremName}
\newtheorem{theorem}{Theorem}[section]
\newtheorem{proposition}[theorem]{Proposition}
\theoremstyle{definition}
\newtheorem{definition}[theorem]{Definition}
\newtheorem{example}[theorem]{Example}
\newtheorem{remark}[theorem]{Remark}
\numberwithin{equation}{section}
\newcommand{\Hess}{\operatorname{Hess}}
\newcommand{\spanning}{\operatorname{Span}}
\begin{document}

\title[Frozen Finsler metrics]
{Geodesic sprays and frozen metrics\\
in rheonomic Lagrange manifolds}

\author{Steen Markvorsen}
\address{DTU Compute, Mathematics, Kgs. Lyngby, Denmark}
\email[S. Markvorsen]{stema@dtu.dk}

\subjclass[2000]{Primary 53, 58}

\keywords{Finsler geometry,  Finsler geodesic spray, Zermelo data, rheonomic Lagrange manifold, Huyghens' principle, Richards' equations, wildfire spread, Hamilton orthogonality.}

\begin{abstract}
We define systems of pre-extremals for the energy functional of regular rheonomic Lagran\-ge manifolds and show how they induce well-defined Hamilton orthogonal nets. Such nets have applications in the modelling of e.g. wildfire spread under time- and space-dependent conditions.  The time function inherited  from such a Hamilton net induces in turn a time-independent Finsler metric -- we call it the associated frozen metric. It is simply obtained by inserting the time function from the net into the given Lagrangean. The energy pre-extremals then become ordinary Finsler geodesics of the frozen metric and the Hamilton orthogonality property is preserved during the freeze.
We compare our results with previous findings of G. W. Richards concerning his application of Huyghens' principle to establish the PDE system for Hamilton orthogonal nets in 2D Randers spaces and also concerning his explicit spray solutions for time-only dependent Randers
spaces. We analyze examples of time-dependent 2D Randers spaces with simple, yet non-trivial, Zermelo data; we obtain analytic and numerical solutions to their respective energy pre-extremal equations; and we display details of the resulting (frozen) Hamilton orthogonal nets.
\end{abstract}

\maketitle



\section{Introduction} \label{secIntroduction}

A large number of natural phenomena evolve under highly non-isotropic and time-varying conditions. The spread of wildfires is but one such phenomenon -- see \cite{Markvorsen2016}. The concept of a regular rheonomic Lagrange manifold $(M, L)$ offers a natural global geometric setting for an initial study of such phenomena. The anisotropy as well as the time- and space-dependency is represented by a time-dependent Finsler metric $F$ with $L=F^{2}$. As a further structural Ansatz for the phenomena under consideration we will assume that they are 'driven' in this Finsler metric background as wave frontals issuing from a given initial base hypersurface $N$ in $M$ with $F$-unit speed rays, which all leave $N$ orthogonally w.r.t. the metric $F$. Huyghens' principle then implies that the rays are everywhere what we call Hamilton orthogonal to the frontals. This key observation was also worked out by Richards in \cite{Richards1990} and \cite{Richards1993a}, where he presents an explicit PDE system which is equivalent to the Hamilton orthogonality for the special 2D Finsler manifolds known as 2D Randers spaces, represented by their so-called Zermelo data. \\

Our main result is that in general, i.e for any regular Lagrangean, and under the structural Ansatz above, Hamilton orthogonality is equivalent to the condition that all the rays of the spread phenomenon are energy pre-extremals for the given Lagrangean. The first order PDE system for Hamilton orthogonality is thus equivalent to a second order ODE system for these pre-extremals. The spread problem is in this way solvable via the rays, which then together -- side by side -- mold the frontals of the spread phenomenon.\\

As a corollary -- which may be of interest in its own right -- we also show the following. If we freeze the metric $F$ to the specific functional value that it has at a given point precisely when the frontal of the given spread passes through this point, then we obtain a \emph{frozen} time-independent Finsler metric $\widehat{F}$, in which the given rays are  $\widehat{F}$-geodesics, which mold the same frontals as before. These frozen metrics are highly dependent on both the base hypersurface $N$ and on the time of ignition from $N$.\\

As already alluded to in the abstract we illustrate and support these main results by explicit calculations for simple 2D time-dependent Randers metrics and we display various details from the corresponding point-ignited spread phenomena.


\subsection{Outline of paper}
In section \ref{secRheonomicLagrange} we describe the concept of a regular rheonomic Lagrange manifold and introduce the corresponding time-dependent indicatrix field. For any given variation of a given curve the corresponding $L$-energy and $F$-length (for unit speed curves) is differentiated with respect to the variation parameter
in section \ref{secVarEnergy} and the ensuing extremal equations are displayed. The notion of a unit fiber  net is introduced in section
\ref{secWFnet} as a background for the presentation of the main results in sections
\ref{secMainThms} and \ref{secFrozen} concerning the equivalence of Hamilton orthogonality and energy pre-extremal rays and concerning the frozen metrics -- as mentioned in the introduction above. The rheonomic Randers spaces and their equivalent Zermelo data are considered in section \ref{secRanders} with the purpose of presenting Richards' results and the promised examples in section \ref{secRichards}.

\section{Rheonomic Lagrange manifolds} \label{secRheonomicLagrange}
A regular rheonomic Lagrange space $(M^{n}, L)$ is a smooth manifold $M$ with a time-dependent Lagrangean $L$ modelled on a time-dependent Finsler metric $F$, i.e. $L = F^{2}$  -- see e.g.
\cite{AZ1}, \cite{Anastasiei1994},
\cite{munteanu2003a}, \cite{frigioiu2008a}, and
\cite{trumper1983a}. The metric $F$ induces for each time $t$ in a given time interval a smooth family of Minkowski norms in the tangent spaces of $M$. For the transparency of this work, we shall be mainly interested in two-dimensional cases and examples. Correspondingly we write -- with $M = \mathbb{R}^{2}$, $p = (u,v) = (u^{1}, u^{2})$, and $V = (x,y) = (x^{1}, x^{2}) \in T_{p}\mathbb{R}^{2} = \spanning \{\partial_{u} , \partial_{v} \}$:
\begin{equation}
\begin{aligned}
F = F_{t} = F(t, p) &= F(t, p, V) \\
&= F(t, u,v, x, y) \\
&=  F(t, u^{1}, u^{2}, x^{1}, x^{2})\quad ,
\end{aligned}
\end{equation}
where the latter index notation is primarily used here for expressions involving general summation over repeated indices. The higher dimensional cases are easily obtained by extending the indices beyond $2$ -- like $p = (u^{1}, \cdots , u^{n})$ and $V = (x^{1}, \cdots ,  x^{n}) \in T_{p}M$.\\

By definition, see \cite{ShenBook2001} and \cite{BCS}, a Finsler metric on a domain $\mathcal{U}$ is a smooth family of Minkowski norms on the tangent planes, i.e. a smooth  family of indicatrix templates which at each time $t$ in each tangent plane $T_{p}\,\mathcal{U}$ at the respective points $p = (u,v)$ in the parameter domain $\mathcal{U}$  is determined by the nonnegative smooth function $F_{t}$ of $t$ as follows:
\begin{enumerate}
\item $F_{t}$ is smooth on each punctured tangent plane $T_{p}\,\mathcal{U} - \{(0,0)\} $.
\item $F_{t}$ is positively homogeneous of degree one: $F_{t}(kV) = kF_{t}(V)$ for every $V\in T_{p}\,\mathcal{U}$ and every $k > 0$.
\item The following bilinear symmetric form on the tangent plane is positive definite:
    \begin{equation}\label{eqPosDef}
    g_{t,p, V}(U, W) = \frac{1}{2}\frac{\partial^{2}}{\partial \lambda \partial \mu}[F_{t}^{2}(V + \lambda U + \mu W)]_{|\lambda=\mu=0}
    \end{equation}
\end{enumerate}

Since the function $F_{t}$ is homogenous of degree $1$, the fundamental metric $g_{t,p, V}(U, W)$ satisfies the following for each time $t$:
\begin{eqnarray}
g_{t,p, V}(V,W) &=& \frac{1}{2}\frac{\partial}{\partial \lambda }[F_{t}^{2}(V + \lambda W)]_{|\lambda=0}\\
g_{t,p, V}(V,V) &=& F_{t}^{2}(V) = \Vert V \Vert^{2}_{F_{t}} \quad .
\end{eqnarray}

Suppose that we use the canonical basis $\{\partial_{u} = b_{1}, \partial_{v} = b_{2}\}$ in $T_{p}\,\mathcal{U}$, and let $V = x^{i}b_{i}$.
Then we can define coordinates of $g = g_{t,p,V}$ in the usual way:
\begin{eqnarray}
2g_{i j}(V) &=& 2g_{t,p, V}(b_{i}, b_{j}) \\
 &=& \frac{\partial^{2}}{\partial \lambda \partial \mu}[F_{t}^{2}(V + \lambda \,b_{i} + \mu \,b_{j})]_{|\lambda=\mu=0} \\
 &=& \Hess_{i \, j}(F_{t}^{2})(V)\\
 &=& [F_{t}^{2}]_{x^{i} x^{j}}(V) \quad ,
\end{eqnarray}
where the Hessian is evaluated at the vector $V$ and where the last line $[F_{t}^{2}]_{x^{i} x^{j}}$  is 'shorthand' for the double derivatives of $F_{t}^{2}$ with respect to the tangent plane coordinates $x^{i}$.

In the following we shall need other partial derivatives of $F_{t}^{2}$ -- such as $[F_{t}^{2}]_{t}(V)$, $[F_{t}^{2}]_{u^{k}}(V)$, and $[F_{t}^{2}]_{u^{l} x^{k}}(V)$  -- as well as the inverse matrix of $g_{i j}(V)$, which are now all well-defined, e.g.:
\begin{equation}
[g^{i j}(V)] = [ g_{i j}(V) ]^{-1} \, \, \, \, \textrm{and} \, \, \, \, [g^{i j}(V)g_{k j}(V)]  = \left[
            \begin{array}{cc}
              1 & 0 \\
              0 & 1 \\
            \end{array}
          \right]
 \quad .
\end{equation}
Moreover, the following second order informations are of well-known and instrumental importance for the study of Finsler manifolds and Lagrangean geometry -- see \cite{ShenBook2001}, \cite{BCS}, \cite{AIM}, and \cite{trumper1983a}:
\begin{equation}
\begin{aligned}
G^{i}(y) &= \left(\frac{1}{4}\right)g^{i\,l}(y)\left(\left[ F_{t}^{2}\right]_{x^{k}\,y^{l}}(y)y^{k} - \left[ F_{t}^{2}\right]_{x^{l}}(y) \right) \\
N_{0}^{i}(y) &=  \left(\frac{1}{2}\right)g^{i\,l}(y)\left[ F_{t}^{2}\right]_{t\,y^{l}}(y) \quad .
\end{aligned}
\end{equation}

\begin{definition}\label{defIndicatrix}
The set of points in the tangent plane $T_{p}\,\mathcal{U}$ which have $F_{t}$-unit position vectors is called the instantaneous indicatrix of $F_{t}$ at $p$:
\begin{equation}
\mathcal{I}_{t, p} = F^{-1}_{t}(1) = \{V \in T_{p}\,\mathcal{U}\, | \, F(t,p, V) = 1\} \quad .
\end{equation}
\end{definition}

Since $g_{t,p, V}$ is positive definite, every  indicatrix $\mathcal{I}_{t, p}$ is automatically strongly convex in its tangent plane at $p$, and it contains the origin of the tangent plane in its interior, see \cite{BCS}. It is therefore a pointed oval -- the point being that origin of the tangent plane.\\

\section{Variations of $L$-energy and of $F$-length} \label{secVarEnergy}

Since we shall be interested in particular aspects of the pre-extremals of the energy functional in $(M, L)$ we briefly review the first variation of energy --  with special emphasis on the influence of the time dependence of the underlying metric. In the following we suppress the indication of the time-dependence and write  $F$  for $F_{t}$.

The first variation formula will give the ODE differential equation conditions for a curve to be an $F^{2}$- \emph{energy extremal} in $M$. The ODE system for the extremals are, of course, nothing but the Euler--Lagrange equations for the time-dependent Lagrange functional $L$, see \cite[(3.5), (3.6)]{Anastasiei1994} and \cite{Antonelli1991}: \\

We let $c$ denote a candidate curve for an extremal of $F^{2}$, i.e. of $L$ :
\begin{equation}
c \quad : \quad [a, b] \to M
\end{equation}
This means that
there is a partition of $[a,b]$
\begin{equation}
a = t_{0} < \, \cdots \, < t_{m} = b \quad ,
\end{equation}
such that $c$ is smooth on each subinterval $[t_{i-1}, t_{i}]$ for every $i = 1, \, \cdots \, , m$ quad .

A \emph{variation} of the curve $c$ is then a piecewise smooth map

\begin{equation}
H \quad : \quad (-\varepsilon, \varepsilon) \times [a,b] \to M
\end{equation}
such that
\begin{equation} \label{eqVar}
\begin{cases}
&H \quad \textrm{is continuous on} \quad (-\varepsilon, \varepsilon) \times [a,b] \\
&H \quad \textrm{is smooth  on each} \quad (-\varepsilon, \varepsilon) \times [t_{i-1}, t_{i}] \\
&H(0,t) = c(t) \quad \textrm{for all} \quad a \leq t \leq b \quad .
\end{cases}
\end{equation}
The last equation in (\ref{eqVar}) states that $c$ is the base curve in the family of curves $c_{u}(t) = H(u,t)$, which sweeps out the variation.

The variation $H$ induces the associated \emph{variation vector field} $V(t)$, so that we have, in local coordinates:
\begin{equation}
\frac{\partial H}{\partial u}(0, t) = V(t) = V^{k}(t)\frac{\partial}{\partial x_{k}}{|_{c(t)}}
\end{equation}

The $F^{2}$-energy values of the individual piecewise smooth curves $c_{u}(t)$ in the variation family $H$ are then given by
\begin{equation}
\begin{aligned}
\mathcal{E}(u) &= \int_{a}^{b}\, L\left(t, c_{u}(t), \dot{c}_{u}(t)\right) \, dt \\
&= \int_{a}^{b}\, F^{2}\left(t, c_{u}(t), \dot{c}_{u}(t)\right) \, dt \\
&= \sum_{i=1}^{m} \int_{t_{i-1}}^{t_{i}}\, F^{2}\left(t, c_{u}(t),  \frac{\partial H}{\partial t}(u, t) \right)\, dt
\end{aligned}
\end{equation}

Then we have the following $u$-derivative of $\mathcal{E}(u)$ at $u=0$.  We refer to \cite{BCS} and apply the short hand notation presented in section \ref{secRheonomicLagrange}. This calculation mimics almost verbatim the classical calculation in \cite{ShenBook2001} with the difference, however, that our $F$ field is now time-dependent, so that there will be an explicit extra term in the integrand below. This extra term is precisely given by $N_{0}^{i}(y)$.

\begin{equation*}
\begin{aligned}
\mathcal{E}'(0) &= \int_{a}^{b}\, \left(\left[F^{2}\right]_{x^{k}}V^{k} + \left[F^{2} \right]_{y^{k}}\frac{dV^{k}}{dt} \right) \, dt \\
&= \int_{a}^{b}\, \left(\left[ F^{2}\right]_{x^{k}} - \left(\frac{d}{dt}\left[F^{2} \right]_{y^{k}} \right) \right)V^{k} \, dt \\
&\phantom{aaaaaaa}+ \sum_{i=1}^{m} \left[\left[F^{2} \right]_{y^{k}} V^{k}\right]_{t_{i-1}}^{t_{i}}\\
&=  \int_{a}^{b}\,\left(\left[ F^{2}\right]_{x^{k}} - \left[F^{2}\right]_{t\,y^{k}} - \left[ F^{2}\right]_{x^{l}\,y^{k}}\dot{c}^{\,l} -  \left[ F^{2}\right]_{y^{l}\,y^{k}}\,\ddot{c}^{\,l} \right)V^{k} \, dt \\
&\phantom{aaaaaaa}+\sum_{i=1}^{m} \left[g_{j\,k}\,\dot{c}^{\,j}\, V^{k}\right]_{t_{i-1}}^{t_{i}}\\
&= - 2\int_{a}^{b}\,g_{j\,k}\,\left(\ddot{c}^{\,j} + 2G^{j}(\dot{c}) + N_{0}^{j}(\dot{c}) \right)V^{k} \, dt \\
&\phantom{aaaaaaa}+2\sum_{i=1}^{m} \left[g_{j\,k}\,\dot{c}^{\,j}\, V^{k}\right]_{t_{i-1}}^{t_{i}} \quad .
\end{aligned}
\end{equation*}

We have thus

\begin{theorem}\label{thmLagrangeExtremal}
Let  $H$ and $V$ denote a variation of a curve $c$  as above.
Then the energy functional on the given variation is
\begin{equation}
\mathcal{E}(c) = \int_{a}^{b} F^{2}\left(t, c_{u}(t), \dot{c}_{u}(t)\right) \, dt \quad ,
\end{equation}
with the following derivative:
\begin{equation} \label{eqEderiv}
\begin{aligned}
\mathcal{E}'(0) &= - 2\int_{a}^{b}\,g_{j\,k}\,\left(\ddot{c}^{\,j} + 2G^{j}(\dot{c}) + N_{0}^{j}(\dot{c}) \right)V^{k} \, dt \\
&\phantom{aaaaaaa}+2\sum_{i=1}^{m} \left[g_{j\,k}\,\dot{c}^{\,j}\, V^{k}\right]_{t_{i-1}}^{t_{i}} \quad .
\end{aligned}
\end{equation}
\end{theorem}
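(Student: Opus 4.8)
The plan is to compute $\mathcal{E}'(0)$ directly, by differentiating under the integral sign and then reorganizing via integration by parts, exactly as in the display preceding the statement; the substance of the theorem is the verification that the reorganized integrand collapses onto the spray coefficients $G^{j}$ together with the rheonomic correction $N_{0}^{j}$. First I would treat $t$ as the (fixed) integration variable and differentiate the integrand $F^{2}(t, c_{u}(t), \dot{c}_{u}(t))$ with respect to $u$ at $u=0$. The crucial bookkeeping point here is that $t$ does \emph{not} depend on $u$, so the explicit time slot of $F^{2}$ contributes nothing at this step; the chain rule yields only the two terms $[F^{2}]_{x^{k}}V^{k} + [F^{2}]_{y^{k}}\,\tfrac{dV^{k}}{dt}$, where I use $\partial_{u}c_{u}^{k}|_{u=0}=V^{k}$ and, after interchanging $\partial_{u}$ and $\partial_{t}$, $\partial_{u}\dot{c}_{u}^{k}|_{u=0}=\tfrac{dV^{k}}{dt}$. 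This interchange is licit precisely on each open subinterval on which $H$ is smooth, which is why the piecewise partition of $[a,b]$ was set up in Section~\ref{secVarEnergy}.

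Next I would integrate the second term by parts on each subinterval $[t_{i-1},t_{i}]$. This produces the boundary sum $\sum_{i}\bigl[[F^{2}]_{y^{k}}V^{k}\bigr]_{t_{i-1}}^{t_{i}}$ together with the bulk contribution $-\int_{a}^{b}\bigl(\tfrac{d}{dt}[F^{2}]_{y^{k}}\bigr)V^{k}\,dt$. The time dependence enters decisively at this point: since $[F^{2}]_{y^{k}}$ is evaluated along $(t,c(t),\dot{c}(t))$, its total $t$-derivative expands by the chain rule as
\[
\frac{d}{dt}[F^{2}]_{y^{k}} = [F^{2}]_{t\,y^{k}} + [F^{2}]_{x^{l}\,y^{k}}\,\dot{c}^{\,l} + [F^{2}]_{y^{l}\,y^{k}}\,\ddot{c}^{\,l},
\]
and it is exactly the first summand $[F^{2}]_{t\,y^{k}}$ that is absent from the classical static computation and that will become the $N_{0}^{j}$ term. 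Substituting gives the third displayed line, with integrand $[F^{2}]_{x^{k}} - [F^{2}]_{t\,y^{k}} - [F^{2}]_{x^{l}\,y^{k}}\dot{c}^{\,l} - [F^{2}]_{y^{l}\,y^{k}}\ddot{c}^{\,l}$.

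It then remains to recognize both pieces in metric form. For the boundary term I would invoke degree-one homogeneity through the identity $g_{t,p,V}(V,W)=\tfrac{1}{2}\tfrac{\partial}{\partial\lambda}[F^{2}(V+\lambda W)]_{|\lambda=0}$ of Section~\ref{secRheonomicLagrange}, which gives $[F^{2}]_{y^{k}}=2g_{jk}\dot{c}^{\,j}$ and hence the boundary sum $2\sum_{i}\bigl[g_{jk}\dot{c}^{\,j}V^{k}\bigr]_{t_{i-1}}^{t_{i}}$. For the bulk term I would match the integrand against the definitions of $G^{i}$ and $N_{0}^{i}$ recorded in Section~\ref{secRheonomicLagrange}, using $g_{jk}g^{jl}=\delta^{l}_{k}$ to check the three identities $-2g_{jk}(2G^{j})=[F^{2}]_{x^{k}}-[F^{2}]_{x^{l}\,y^{k}}\dot{c}^{\,l}$, $-2g_{jk}N_{0}^{j}=-[F^{2}]_{t\,y^{k}}$, and $-2g_{jk}\ddot{c}^{\,j}=-[F^{2}]_{y^{l}\,y^{k}}\ddot{c}^{\,l}$; summing these shows the integrand equals $-2g_{jk}(\ddot{c}^{\,j}+2G^{j}+N_{0}^{j})$, which is the claimed formula. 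The main obstacle is not a hard estimate but the disciplined accounting of the explicit time slot: keeping it silent in the first $u$-differentiation while correctly retaining it in the total $t$-derivative, and then confirming that the normalizations $\tfrac{1}{4}$ and $\tfrac{1}{2}$ in $G^{i}$ and $N_{0}^{i}$ are exactly those that reproduce each term with its correct coefficient after contraction.
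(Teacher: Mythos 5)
Your proposal is correct and follows essentially the same route as the paper's own proof: differentiate under the integral (with the explicit time slot inert since $t$ is independent of $u$), integrate by parts on each smoothness subinterval, expand $\tfrac{d}{dt}[F^{2}]_{y^{k}}$ so that the rheonomic term $[F^{2}]_{t\,y^{k}}$ appears, and contract with $g^{jl}g_{jk}=\delta^{l}_{k}$ and Euler homogeneity ($[F^{2}]_{y^{k}}=2g_{jk}\dot{c}^{\,j}$) to recognize $-2g_{jk}(\ddot{c}^{\,j}+2G^{j}+N_{0}^{j})$ and the boundary sum. Your explicit verification of the three contraction identities matching the normalizations $\tfrac14$ in $G^{i}$ and $\tfrac12$ in $N_{0}^{i}$ is exactly the bookkeeping the paper leaves implicit in its final displayed line.
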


Since we shall need it below we observe the following immediate analogue for the $F$-\emph{length} functional $\mathcal{L}$ under the assumption that the base curve $c$ is $F$-unit speed parametrized, see \cite{ShenBook2001}, \cite{Markvorsen2016}:

\begin{proposition}
Suppose again we conider the variational setting with $H$ and $V$ as above, but now with base curve $c$ satisfying the unit speed condition
\begin{equation}
\Vert \dot{c}(t) \Vert_{F} = 1 \quad \textrm{for all} \quad t \in \, [a, b]\quad.
\end{equation}
Then the length functional on the given variation is
\begin{equation}
\mathcal{L}(c) = \int_{a}^{b} F\left(t, c_{u}(t), \dot{c}_{u}(t)\right) \, dt \quad ,
\end{equation}
and it has the same expression for its derivative as the energy functional -- except for the factor $2$:
\begin{equation} \label{eqLderiv}
\begin{aligned}
\mathcal{L}'(0) &=
- \int_{a}^{b}\,g_{j\,k}\,\left(\ddot{c}^{\,j} + 2G^{j}(\dot{c}) + N_{0}^{j}(\dot{c}) \right)V^{k} \, dt \\
 &\phantom{aaaaaaa}+\sum_{i=1}^{m} \left[g_{j\,k}\,\dot{c}^{\,j}\, V^{k}\right]_{t_{i-1}}^{t_{i}} \quad .
\end{aligned}
\end{equation}
\end{proposition}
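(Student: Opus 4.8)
The plan is to repeat verbatim the first-variation computation that produced Theorem \ref{thmLagrangeExtremal}, but now carrying the integrand $F$ rather than $F^{2}$, and then to exploit the pointwise algebraic relation $[F]_{\bullet} = \tfrac{1}{2F}[F^{2}]_{\bullet}$ together with the unit-speed normalization $F = 1$ along the base curve $c$. First I would differentiate under the integral sign and integrate by parts over each smooth subinterval, exactly as in the energy case, to obtain
\begin{equation*}
\mathcal{L}'(0) = \int_{a}^{b} \left( [F]_{x^{k}} - \frac{d}{dt}[F]_{y^{k}} \right) V^{k}\, dt + \sum_{i=1}^{m} \left[\, [F]_{y^{k}} V^{k} \,\right]_{t_{i-1}}^{t_{i}} \quad .
\end{equation*}
This is the same formal manipulation as for $\mathcal{E}$; all the content lies in re-expressing the three ingredients $[F]_{x^{k}}$, $[F]_{y^{k}}$, and $\tfrac{d}{dt}[F]_{y^{k}}$ along $c$ in terms of the quantities already computed for $F^{2}$ in the derivation of \eqref{eqEderiv}.

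Next I would use that $F^{2} = (F)^{2}$ as an identity on the slit tangent bundle, so that $[F^{2}]_{\bullet} = 2F\,[F]_{\bullet}$ and hence $[F]_{\bullet} = \tfrac{1}{2F}[F^{2}]_{\bullet}$ for each derivative slot $\bullet \in \{x^{k}, y^{k}\}$. Evaluated along the base curve, where $F = 1$, this gives at once $[F]_{x^{k}} = \tfrac{1}{2}[F^{2}]_{x^{k}}$ and $[F]_{y^{k}} = \tfrac{1}{2}[F^{2}]_{y^{k}} = g_{j\,k}\,\dot{c}^{\,j}$, the last equality using the degree-one homogeneity of $F$ (equivalently $[F^{2}]_{y^{k}} = 2g_{j\,k}\dot{c}^{\,j}$). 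The boundary term therefore already takes the desired shape $\sum_{i} \left[ g_{j\,k}\dot{c}^{\,j} V^{k} \right]_{t_{i-1}}^{t_{i}}$, now without the factor $2$ that appears in \eqref{eqEderiv}.

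The hard part -- and the only place where the unit-speed hypothesis is genuinely used -- is the term $\tfrac{d}{dt}[F]_{y^{k}}$, where I must not substitute $F = 1$ before differentiating, since $\tfrac{d}{dt}$ probes a whole neighborhood of $c$. Differentiating the identity $[F]_{y^{k}} = \tfrac{1}{2F}[F^{2}]_{y^{k}}$ along the curve yields
\begin{equation*}
\frac{d}{dt}[F]_{y^{k}} = \frac{1}{2F}\frac{d}{dt}[F^{2}]_{y^{k}} - \frac{1}{2F^{2}}\left( \frac{dF}{dt} \right)[F^{2}]_{y^{k}} \quad ,
\end{equation*}
where $\tfrac{dF}{dt}$ denotes the total derivative of $F$ along $c$. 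The crucial observation is that the unit-speed condition $F(t, c(t), \dot{c}(t)) = 1$ for all $t$ forces $\tfrac{dF}{dt} \equiv 0$ along the base curve, so the second, potentially obstructing, term drops out entirely, while the first reduces at $F = 1$ to $\tfrac{1}{2}\tfrac{d}{dt}[F^{2}]_{y^{k}}$.

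Consequently the interior integrand is exactly half of its energy counterpart: along $c$ we have $[F]_{x^{k}} - \tfrac{d}{dt}[F]_{y^{k}} = \tfrac{1}{2}\big( [F^{2}]_{x^{k}} - \tfrac{d}{dt}[F^{2}]_{y^{k}} \big)$, and invoking the expansion $[F^{2}]_{x^{k}} - \tfrac{d}{dt}[F^{2}]_{y^{k}} = -2g_{j\,k}\big(\ddot{c}^{\,j} + 2G^{j}(\dot{c}) + N_{0}^{j}(\dot{c})\big)$ already established for Theorem \ref{thmLagrangeExtremal} gives the interior term $-\int_{a}^{b} g_{j\,k}\big(\ddot{c}^{\,j} + 2G^{j}(\dot{c}) + N_{0}^{j}(\dot{c})\big)V^{k}\,dt$. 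Assembling the interior and boundary contributions yields precisely \eqref{eqLderiv}, which differs from \eqref{eqEderiv} exactly by the overall factor $2$, as claimed.
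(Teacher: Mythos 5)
Your proof is correct. The paper itself offers no written proof of this proposition -- it is presented as an ``immediate analogue'' of Theorem \ref{thmLagrangeExtremal} with a pointer to the literature -- and your computation supplies exactly the analogue the paper has in mind: repeat the first-variation calculation with integrand $F$, integrate by parts on each smooth subinterval, and use homogeneity together with the unit-speed normalization. You also handle the one genuinely delicate point correctly: in $\tfrac{d}{dt}[F]_{y^{k}}$ one must differentiate the identity $[F]_{y^{k}} = \tfrac{1}{2F}[F^{2}]_{y^{k}}$ \emph{before} setting $F=1$, and the potentially obstructing term vanishes precisely because $\tfrac{d}{dt}F\left(t, c(t), \dot{c}(t)\right) = 0$ along a unit-speed curve. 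One remark: the argument can be shortened by applying the relation $[F^{2}]_{\bullet} = 2F\,[F]_{\bullet}$ to the integrand of $\mathcal{L}'(0)$ before any integration by parts; since $F \equiv 1$ along the base curve, the $u$-derivative of the energy integrand is pointwise twice that of the length integrand, so $\mathcal{E}'(0) = 2\,\mathcal{L}'(0)$ for every variation of a unit-speed base curve, and \eqref{eqLderiv} then follows from \eqref{eqEderiv} with no further computation -- in particular the quotient-rule step is avoided entirely. Your longer route reaches the same conclusion and is sound.
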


\begin{remark}
We note that if the Lagrange-Finsler metric $F$ is actually time-independent, then the formulas \eqref{eqEderiv} and \eqref{eqLderiv} above are just modified by setting $N_{0}^{j}(\dot{c}) =0$.
\end{remark}
\section{Unit fiber nets} \label{secWFnet}

We first define a special class of nets in $(M^{n}, F)$ as follows:

\begin{definition} \label{defNet}
Let $N^{n-1}$ be a smooth embedded orientable hypersurface in $(M^{n}, F)$ with a well-defined choice of normal  vector field $n$ at time $t_{0}$, i.e. $n$ is everywhere orthogonal to $N$ with respect to $F_{t_{0}}$ and has unit length with respect to $F_{t_{0}}$.
An $(N, t_{0})$-based \emph{unit fiber net} $\gamma$ in $(M^{n}, F)$ is a diffeomorphism
\begin{equation}
\gamma \, \, : \, \, N\, \times \,\, ]\,t_{0}, T[ \quad \longmapsto \quad  \mathcal{U} \subset M
\end{equation}
with the property that each fiber $\gamma(s_{0}, t)$ is a unit speed ray that 'leaves' $N$ orthogonally at time $t_{0}$:
\begin{equation} \label{eqWFray}
\begin{aligned}
\gamma(s, t_{0}) &= s  \quad \textrm{for all} \quad s \in N \quad \textrm{and} \\
\dot{\gamma}_{t}(s, t_{0}) &=  n(s) \quad \textrm{for all} \quad s \in N \quad \textrm{and} \\
\Vert \dot{\gamma}_{t}(s, t)\Vert &=1 \quad \textrm{for all} \quad s \in N \quad \textrm{and all} \quad t \in \, ]t_{0}, T[ \quad .
\end{aligned}
\end{equation}
\end{definition}

\begin{remark}
The definition of an $(N, t_{0})$-based \emph{unit fiber net}  can easily be extended to submanifolds $N$ of higher co-dimension than $1$ by fattening the submanifold to a sufficiently thin $\varepsilon$-tube w.r.t. $F_{t_{0}}$ and then consider the boundary hypersurface of that tube instead. We shall tacitly assume this construction for the cases illustrated explicitly below, where $N$ is a point $p$ -- in which case the point in question  should be fattened to a small $\varepsilon$-sphere around $p$. For example, a $(p, t_{0})$-based \emph{unit fiber net} in $\mathbb{R}^{2}$ which corresponds to axis-symmetric polar elliptic coordinates is then a parametrization of the following type $\gamma(s,t) = (p_{1} + t\cdot a\cdot \cos(s), p_{2} + t\cdot b \cdot \sin(s))$, $t \in \, ]0, \infty[$, $s \in \, \mathbb{S}^{1}$. These specific  nets  appear naturally in constant Randers metrics in $\mathbb{R}^{2}$ with Zermelo data $a$, $b$, $C=0$, and $\theta=0$ -- see sections \ref{secRanders} and \ref{secRichards} below.
\end{remark}

\subsection{Huyghens' principle}
A much celebrated and useful principle for obtaining a particular spray structure of an  $(N, t_{0})$-based \emph{unit fiber net}, which  is also our main concern in this work, is Huyghens' principle -- see  \cite{Arnold1989} and \cite{Markvorsen2016}. To state the principle together with one of its significant consequences, we shall first introduce the time-level sets of the given net. For each $t_{1} \in \, ]t_{0}, T[$ we define such a  level in the usual way:
\begin{equation}
\eta_{t_{1}} =  \{ p \in \mathcal{U} \, | \,  t(p) = t_{1}\, \} \quad .
\end{equation}

\begin{definition} \label{defHuyghensPrincip}
Let $\gamma(s,t)$ denote an $(N, t_{0})$-based unit fiber net. We will say that the net satisfies Huyghens' principle if the following holds true for all sufficiently small (but nonzero) time increments $\delta$: The time level set $\eta_{t_{1} + \delta}$ is obtained as the \emph{envelope} of the set of $(p, t_{1})$-based unit fiber nets of duration $t \in \, ]t_{1}, t_{1} + \delta[$ in $\mathcal{U}$ where $p$ goes through all points in $\eta_{t_{1}}$. These $(p, t_{1})$-based unit fiber nets will be called Huyghens droplets of duration $\delta$ from the level set $\eta_{t_{1}}$ -- see examples in figure \ref{figHuyghens34} below.
\end{definition}

The structural consequence of Huyghens' principle is then encoded into the following result, see \cite[Theorem p. 251]{Arnold1989}, and compare \cite[figure 196]{Arnold1989} (concerning conjugate directions) with figure \ref{figHamilton34} (concerning the equivalent $F$-orthogonal directions) below.

\begin{theorem} \label{thmHuyghens}
Let $\gamma(s,t)$ denote an $(N, t_{0})$-based unit fiber net which satisfies Huyghens' principle. Then  the direction $\dot{\gamma}_{s}(s,t_{1})$ of the frontal $\eta_{t_{1}}$ at time $t_{1}$ and at a given point $p$ on that frontal is $F$-orthogonal to the direction $\dot{\gamma}_{t}(s,t_{1})$ of the ray  $\gamma(s,t)$ through $p$, i.e.
\begin{equation} \label{eqHuyghensOrthog}
\dot{\gamma}_{s}(s,t) \bot_{F_{t}} \dot{\gamma}_{t}(s,t) \quad \textrm{for all} \quad s \in N \, , \, t\, \in \, ]t_{0}, T[  \quad. \\
\end{equation}
\end{theorem}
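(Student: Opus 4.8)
The plan is to obtain \eqref{eqHuyghensOrthog} directly from the envelope characterization in Definition \ref{defHuyghensPrincip}, by reading off, in the infinitesimal limit, the tangency between each Huyghens droplet and the advanced frontal. I fix once and for all a time $t_1 \in \, ]t_0, T[$, a point $s \in N$, the frontal point $p = \gamma(s, t_1) \in \eta_{t_1}$, and I abbreviate the unit ray vector by $y := \dot{\gamma}_{t}(s, t_1)$, so that $F_{t_1}(p, y) = 1$ by the unit speed condition in \eqref{eqWFray}. Throughout I work to first order in the small, nonzero time increment $\delta$.

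First I would describe the shape of the $(p, t_1)$-based Huyghens droplet of duration $\delta$. It is swept out by $F$-unit speed rays issuing from $p$ at time $t_1$, so a Taylor expansion in $t$ together with \eqref{eqWFray} shows that its boundary is, to leading order, the instantaneous indicatrix $\mathcal{I}_{t_1, p}$ of Definition \ref{defIndicatrix}, scaled by $\delta$ and translated to $p$,
\begin{equation*}
\{\, p + \delta\, V + o(\delta) \; : \; F_{t_1}(p, V) = 1 \,\} \quad .
\end{equation*}
The ray $\gamma(s, \cdot)$ itself reaches the point $\gamma(s, t_1 + \delta) = p + \delta\, y + o(\delta)$, which, because $F_{t_1}(p, y) = 1$, is precisely the droplet point in the direction of $y$; this is the point at which the droplet from $p$ touches the advanced frontal $\eta_{t_1 + \delta}$.

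Next I would invoke the envelope condition. By Definition \ref{defHuyghensPrincip}, $\eta_{t_1 + \delta}$ is the envelope of the droplets as $p$ ranges over $\eta_{t_1}$, so at the point of contact the droplet boundary and the envelope share a tangent line. To leading order the tangent direction to the envelope at $\gamma(s, t_1 + \delta)$ is the frontal direction $\dot{\gamma}_{s}(s, t_1)$, while the tangent direction to the scaled, translated droplet at that point equals the tangent direction to the indicatrix $\mathcal{I}_{t_1, p} = F_{t_1}^{-1}(1)$ at $y$. Differentiating the defining relation $F_{t_1}^2 = 1$ of the indicatrix along a curve through $y$ and using the homogeneity identity
\begin{equation*}
g_{t_1, p, y}(y, W) = \frac{1}{2}\frac{\partial}{\partial \lambda}\left[F_{t_1}^2(y + \lambda W)\right]_{|\lambda = 0}
\end{equation*}
recorded in section \ref{secRheonomicLagrange}, this tangent space is exactly $\{\, W : g_{t_1, p, y}(y, W) = 0 \,\}$. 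Tangency therefore forces $g_{t_1, p, y}(y, \dot{\gamma}_{s}(s, t_1)) = 0$, which is by definition $\dot{\gamma}_{s}(s, t_1) \bot_{F_{t_1}} y$, i.e.\ \eqref{eqHuyghensOrthog}; since $t_1$ and $s$ were arbitrary the relation holds throughout $]t_0, T[$.

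The main obstacle I anticipate is controlling the time-dependence of the metric inside the droplet expansion: the droplet from $p$ actually evolves under the moving norm $F_t$ for $t \in \, ]t_1, t_1 + \delta[$ rather than under the frozen norm $F_{t_1}$, so one must check that replacing $F_t$ by $F_{t_1}$ alters the droplet boundary only by $o(\delta)$ and hence leaves the leading-order tangent direction -- which carries all the orthogonality information -- unchanged. The second delicate point is the rigorous envelope argument, namely that the contact of the droplet with the envelope occurs exactly along the ray direction $y$ and that the two leading-order tangent lines are as asserted; this is the substance of the cited \cite[Theorem p. 251]{Arnold1989}, which applies here precisely because the positive definiteness \eqref{eqPosDef} makes every instantaneous indicatrix $\mathcal{I}_{t_1, p}$ a strongly convex pointed oval.
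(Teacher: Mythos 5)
Your argument is correct and is essentially the same proof the paper relies on: the paper gives no independent proof of Theorem \ref{thmHuyghens}, but simply invokes the envelope--tangency argument of \cite[Theorem p.~251]{Arnold1989}, which is exactly what you have unpacked -- the droplet boundary agrees to first order in $\delta$ with the scaled, translated instantaneous indicatrix, the envelope condition forces tangency at the point reached by the ray, and the tangent space of the indicatrix at the unit vector $y$ is the $g_{t_{1},p,y}$-orthogonal complement of $y$. The two caveats you flag (the $o(\delta)$ control of the time-dependence of $F$, and the rigorous identification of the contact point) are precisely the content of the cited theorem and of the paper's remark that only an infinitesimal version of Huyghens' principle is needed, so your write-up is, if anything, more detailed than the paper's own treatment.
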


\begin{remark}
The orthogonality obtained and expressed in \eqref{eqHuyghensOrthog} we will call \emph{Hamilton orthogonality}. In fact, the proof for Hamilton orthogonality only needs an infinitesimal version of Huyghens' principle.
\end{remark}

This specific consequence of Huyghens principle was established for $(N, t_{0})$-based unit fiber nets in 2D Randers spaces by G. W. Richards in \cite{Richards1990}, where the ensuing Hamilton orthogonality is expressed directly in terms of a system of PDE equations -- see theorem \ref{thmRichardsHamilton} in section \ref{secRichards} below.

\begin{remark} \label{remHuyghensRev}
It is not clear if -- or under which additional conditions -- the converse to theorem \ref{thmHuyghens} holds true, i.e. if  Hamilton orthogonality for a given $(N, t_{0})$-based unit fiber net implies that the net and the background metric also satisfies Huyghens' principle? The problem is that the metric $F$ is in this generality time-dependent so that e.g. the triangle inequality does not hold for the respective rays. As we shall see below, however, the rays do solve an energy pre-extremal problem, and the rays are actually geodesics in the so-called frozen metrics associated with $F$. However, these frozen metrics typically do not agree if the corresponding nets have different base hypersurfaces $N$. Nevertheless, it is indicated by figure \ref{figHuyghens34} in section \ref{secRichards} below, that in suitable simplistic settings, like the ones under consideration there, the
Huyghens droplets from one frontal actually seem to envelope the next frontal in a corresponding increment of time.

\begin{figure}[h]
 \includegraphics[width=70mm]{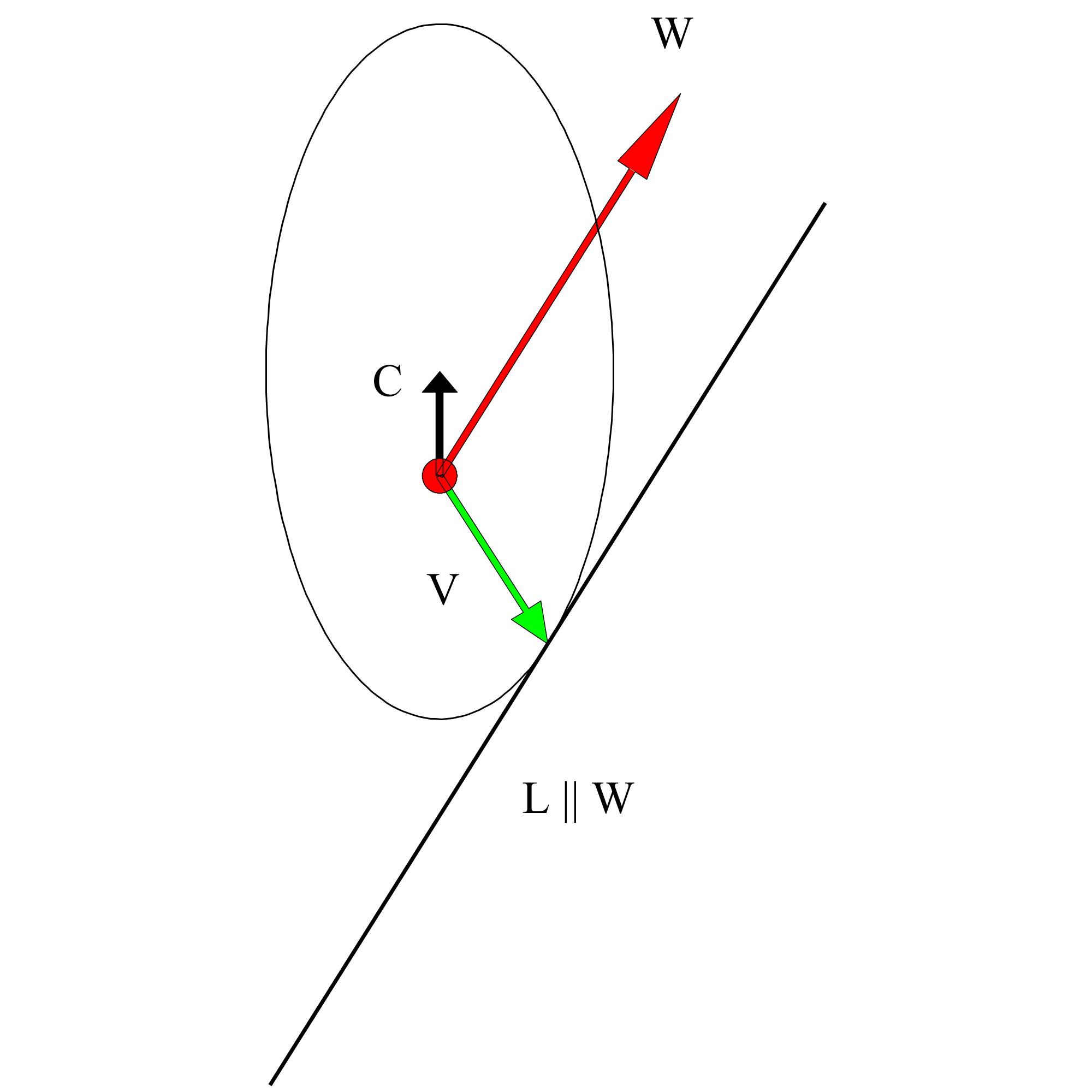}
\begin{center}
\caption{A typical elliptic indicatrix for a Randers metric $F$ at a given point for a given time. The two vectors $V$ and $W$ are $F$-orthogonal -- see section \ref{secRanders} below and \cite{Markvorsen2016}.} \label{figHamilton34}
\end{center}
\end{figure}

 If $F$ is time-independent we do have the necessary triangle inequalities at our disposal, and in this case the converse to theorem \ref{thmHuyghens} holds true -- as discussed in \cite{Markvorsen2016}.
\end{remark}

\section{Main results} \label{secMainThms}

We now show that the Hamilton orthogonality defined above is equivalent to a system of ODE equations for the rays of the net in question:

\begin{theorem} \label{thmMain}
Suppose $\gamma(s,t)$ is an $(N, t_{0})$-based unit fiber net in $M$. Then the following two conditions for $\gamma$ are equivalent:\\ \\
A: The \emph{energy pre-extremal condition}:
\begin{equation} \label{eqCE}
\rho(s,t) \cdot \dot{{\gamma}}_{t}(s,t) =  \sum_{j}\left(\ddot{\gamma}_{t}^{\,j}(s,t) + 2G^{j}(\dot{\gamma}_{t}(s,t)) + N_{0}^{j}(\dot{\gamma}_{t}(s,t))\right)\partial_{j}\, \, ,
\end{equation}
for some function $\rho$ of $s \in \, N$ and $t \in \, ]t_{0}, T[$,
and:\\ \\
B: The \emph{Hamilton orthogonality condition}:
\begin{equation} \label{eqHO}
\dot{\gamma}_{s}(s,t) \bot_{F_{t}} \dot{\gamma}_{t}(s,t) \quad \textrm{for all} \quad s \in N \, , \, t\, \in \, ]t_{0}, T[  \quad. \\ \\
\end{equation}
\end{theorem}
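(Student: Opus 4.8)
The plan is to run both implications through the first variation formula of Theorem~\ref{thmLagrangeExtremal}, exploiting the one structural fact that every fiber of a unit fiber net has $F$-unit speed. First I fix a ray $c(t)=\gamma(s_0,t)$ and view the $s$-family $s\mapsto\gamma(s,t)$ as a variation of $c$ with variation field $V=\dot\gamma_s$; for $n>2$ one repeats the argument for each frontal direction $\partial_{s^{\alpha}}\gamma$, whereas in the 2D case of main interest there is a single such direction. Because each ray has $F$-unit speed, the energy over \emph{every} subinterval is $\mathcal{E}_{[t_1,t_2]}(s)=\int_{t_1}^{t_2}F^{2}(\dot\gamma_t)\,dt=t_2-t_1$, independent of $s$, so $\mathcal{E}'(s_0)=0$ on each such subinterval.

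Inserting $\mathcal{E}'(s_0)=0$ into \eqref{eqEderiv} (one smooth piece, so $m=1$) annihilates the left-hand side and leaves, for all $t_1<t_2$,
\[ \left[\,g_{j\,k}(\dot\gamma_t)\,\dot\gamma_t^{\,j}\,\dot\gamma_s^{\,k}\,\right]_{t_1}^{t_2}=\int_{t_1}^{t_2}g_{j\,k}(\dot\gamma_t)\,E^{\,j}\,\dot\gamma_s^{\,k}\,dt\,, \]
with $E^{\,j}:=\ddot\gamma_t^{\,j}+2G^{j}(\dot\gamma_t)+N_0^{j}(\dot\gamma_t)$ and the fundamental tensor referenced at the ray velocity $\dot\gamma_t$. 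As this holds on every subinterval, differentiating in $t_2$ produces the pointwise identity
\[ \frac{d}{dt}\,g_{\dot\gamma_t}(\dot\gamma_t,\dot\gamma_s)=g_{\dot\gamma_t}(E,\dot\gamma_s)\,,\qquad E=E^{\,j}\partial_j\,,\qquad(\star) \]
which holds for \emph{any} unit fiber net, independently of A and B. The boundary quantity $\beta(t):=g_{\dot\gamma_t}(\dot\gamma_t,\dot\gamma_s)$ is exactly the transversality expression whose vanishing is the Hamilton orthogonality \eqref{eqHO}.

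For $\mathrm{B}\Rightarrow\mathrm{A}$: Hamilton orthogonality gives $\beta\equiv0$, so $(\star)$ forces $g_{\dot\gamma_t}(E,\dot\gamma_s)=0$ for every frontal direction. Since $\gamma$ is a diffeomorphism, these directions span the tangent space of the frontal $\eta_t$, an $(n-1)$-dimensional subspace whose $g_{\dot\gamma_t}$-orthogonal complement is a line; condition B places $\dot\gamma_t$ in that same line, so $E$ and $\dot\gamma_t$ are collinear, i.e. $E=\rho\,\dot\gamma_t$ with smooth $\rho(s,t)=g_{\dot\gamma_t}(E,\dot\gamma_t)$. This is \eqref{eqCE}.

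For $\mathrm{A}\Rightarrow\mathrm{B}$: substituting $E=\rho\,\dot\gamma_t$ into $(\star)$ turns it into the linear first order ODE $\beta'(t)=\rho(s,t)\,\beta(t)$ along each ray, where pairing \eqref{eqCE} with $\dot\gamma_t$ and using $g_{\dot\gamma_t}(\dot\gamma_t,\dot\gamma_t)=F^{2}(\dot\gamma_t)=1$ identifies $\rho=g_{\dot\gamma_t}(E,\dot\gamma_t)$. The initial value comes from the net axioms \eqref{eqWFray}: at $t=t_0$ the ray leaves with $\dot\gamma_t=n(s)$ while $\dot\gamma_s$ is tangent to $N=\gamma(\cdot,t_0)$, and $n$ is $F_{t_0}$-orthogonal to $N$, so $\beta(t_0)=0$. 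Since the net extends smoothly to $t_0$ with $\dot\gamma_t(s,t_0)=n(s)\neq0$, the coefficient $\rho$ is continuous up to $t_0$ and uniqueness for the linear ODE forces $\beta\equiv0$, which is \eqref{eqHO}. I expect the main obstacle to be the clean passage from the integrated relation to the pointwise $(\star)$: one must check that the variation enters \eqref{eqEderiv} with the correct reference vector $\dot\gamma_t$ inside $g_{j\,k}$, and that constancy of energy on \emph{all} subintervals, not merely on $]t_0,T[$, is what upgrades a single integral identity to a pointwise one. The collinearity step and the one-sided ODE uniqueness at the open endpoint $t_0$ are then routine.
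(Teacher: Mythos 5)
Your proposal is correct and takes essentially the same route as the paper's proof: the paper likewise feeds the net variation $V=\dot{\gamma}_{s}$ into the first variation formula (using the length functional \eqref{eqLderiv} instead of the energy \eqref{eqEderiv}, an immaterial difference given the unit-speed condition), obtains for A$\Rightarrow$B exactly your linear ODE $m'=\rho\, m$ with $m(t_{0})=0$ and concludes $m\equiv 0$ by uniqueness, and for B$\Rightarrow$A uses the vanishing of the boundary terms to force the integrand to vanish pointwise and then the same collinearity argument. Your only (cosmetic) reorganization is to isolate the pointwise identity $(\star)$ once and reuse it in both directions.
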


\begin{definition}
The unit fiber net satisfying one, hence both, of the conditions \eqref{eqHO} and \eqref{eqCE} is called \emph{the $(N, t_{0})$-based WF-net} in $(M, L)$.
\end{definition}

Before proving the theorem we first observe, that
standard ODE theory gives existence and uniqueness of a net $\gamma(s,t)$ which satisfies the equations \eqref{eqCE} and \eqref{eqHO}:

\begin{proposition} \label{propExist}
Suppose $N$ does not curve too much in the direction of its $n$-field at time $t_{0}$, i.e.  the hypersurface  $N$ has bounded second fundamental form in $M$ with respect to $F_{t_{0}}$. Then there exists a unique $(N, t_{0})$-based $WF$-net $\gamma(s,t)$ for $t \in \, ]t_{0}, T[$ for sufficiently small $T$.
\end{proposition}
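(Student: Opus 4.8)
The plan is to read the vector equation \eqref{eqCE} as an explicit second order ODE system along each ray $t \mapsto \gamma(s,t)$, with $s$ treated as a parameter, and then to combine standard Cauchy--Lipschitz theory with the geometric hypothesis on $N$ in order to control both the length of the existence interval and the required net (diffeomorphism) property. Since $\dot\gamma_t = \dot\gamma_t^{\,j}\partial_j$, equation \eqref{eqCE} says componentwise that
\[
\ddot\gamma_t^{\,j} + 2G^{j}(\dot\gamma_t) + N_{0}^{j}(\dot\gamma_t) = \rho\,\dot\gamma_t^{\,j},
\]
that is, the Finslerian acceleration of the ray is everywhere parallel to its velocity, the proportionality factor $\rho$ being at this stage an unknown function of $s$ and $t$. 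The only remaining constraint is the unit speed condition $F_t^{2}(\gamma,\dot\gamma_t)=1$ from \eqref{eqWFray}.

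First I would pin down $\rho$ from the unit speed constraint. Differentiating $F^{2}(t,\gamma,\dot\gamma_t)=1$ in $t$ and using the homogeneity identities $g_{jk}(\dot\gamma_t)\dot\gamma_t^{\,k}=\tfrac12[F^{2}]_{x^{j}}(\dot\gamma_t)$ and $g_{jk}\dot\gamma_t^{\,j}\dot\gamma_t^{\,k}=F^{2}=1$, I would insert $\ddot\gamma_t^{\,j}=\rho\,\dot\gamma_t^{\,j}-2G^{j}-N_{0}^{j}$ into the resulting scalar relation and solve for $\rho$. This produces $\rho$ as an explicit smooth function of $(t,\gamma,\dot\gamma_t)$ on the slit tangent bundle, expressed through $[F^{2}]_{t}$, $[F^{2}]_{u^{k}}$, $g_{jk}$, $G^{j}$ and $N_{0}^{j}$. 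Conversely, with this choice of $\rho$ one checks that $\tfrac{d}{dt}F^{2}(t,\gamma,\dot\gamma_t)=0$ along any solution, so the unit speed condition, being valid at $t_0$ because $\dot\gamma_t(s,t_0)=n(s)$, is propagated automatically. In this way the constrained problem \eqref{eqCE}--\eqref{eqWFray} becomes a genuine unconstrained explicit system $\ddot\gamma_t^{\,j}=\Phi^{j}(t,\gamma,\dot\gamma_t)$ with smooth right-hand side near the initial data, where $\dot\gamma_t(s,t_0)=n(s)\neq 0$ keeps us away from the zero section.

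Next, for each fixed $s\in N$ the Picard--Lindel\"of theorem gives a unique local solution with Cauchy data $\gamma(s,t_0)=s$ and $\dot\gamma_t(s,t_0)=n(s)$, depending smoothly on the initial data and hence on $s$. This already yields a smooth candidate net on an interval whose length a priori depends on $s$. To secure a uniform $T$ and, more importantly, to verify that $\gamma$ is the diffeomorphism required by Definition \ref{defNet}, I would differentiate the ray equation with respect to $s$: the variation field $\dot\gamma_s$ then satisfies a linear Jacobi-type second order ODE whose initial data consist of $\dot\gamma_s(s,t_0)\in T_sN$ together with a first $t$-derivative governed by the second fundamental form of $N$ at $t_0$. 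At $t=t_0$ the vectors $\dot\gamma_s(s,t_0)$ and $\dot\gamma_t(s,t_0)=n(s)$ are linearly independent, so $\gamma$ is an immersion there, and by continuity it remains an immersion, and injective, for $t$ close to $t_0$.

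The hypothesis enters precisely at this point: a bound on the second fundamental form of $N$ with respect to $F_{t_0}$ bounds the initial data of the Jacobi system uniformly in $s$, so standard linear-ODE estimates keep $(\dot\gamma_s,\dot\gamma_t)$ uniformly linearly independent -- equivalently the Jacobian of $\gamma$ bounded away from zero -- on a single interval $]t_0,T[$ independent of $s$. Shrinking $T$ if necessary then secures global injectivity, making $\gamma$ a diffeomorphism onto an open set $\mathcal{U}\subset M$, i.e. an $(N,t_0)$-based unit fiber net solving \eqref{eqCE}; by Theorem \ref{thmMain} it equivalently solves \eqref{eqHO} and is therefore a WF-net, and its uniqueness is inherited from that of the ODE solutions. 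I expect the main obstacle to be exactly this last, genuinely geometric, step: translating the bounded-second-fundamental-form assumption into a uniform lower bound on the time before focal points of the ray congruence appear, which is what simultaneously delivers the uniform $T$ and the non-degeneracy of the net.
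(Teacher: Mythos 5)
Your proposal is, in substance, the proof the paper intends but never writes out: the paper's entire justification for Proposition \ref{propExist} is the one-line remark that ``standard ODE theory gives existence and uniqueness,'' so your text is a correct fleshing-out of exactly that approach rather than a different route. The step that genuinely needs doing --- and that you do --- is the elimination of the unknown factor $\rho$. Writing $E(t)=F^{2}(t,\gamma,\dot\gamma_t)$ and inserting $\ddot\gamma_t^{\,k}=\rho\,\dot\gamma_t^{\,k}-2G^{k}-N_{0}^{k}$ into $E'$, Euler homogeneity $[F^{2}]_{x^{k}}\dot\gamma_t^{\,k}=2F^{2}$ gives $E'=A+2\rho E$ with $A=[F^{2}]_{t}+[F^{2}]_{u^{k}}\dot\gamma_t^{\,k}-[F^{2}]_{x^{k}}\left(2G^{k}+N_{0}^{k}\right)$; the choice $\rho=-A/2$ then yields $E'=A(1-E)$, so $E\equiv 1$ propagates from the initial condition $\dot\gamma_{t}(s,t_{0})=n(s)$, the constrained system becomes an unconstrained smooth second-order system, Picard--Lindel\"of applies, and uniqueness follows because any WF-net must have this particular $\rho$. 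This is exactly what makes the paper's appeal to ``standard ODE theory'' legitimate, and your Jacobi-field argument for the immersion property (with initial data controlled by $\partial_{s}n$, i.e.\ the shape operator) is the right way to get a $T$ uniform in $s$.

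One step does not hold as stated: ``shrinking $T$ if necessary then secures global injectivity'' does not follow from bounded second fundamental form alone when $N$ is non-compact. A planar curve of bounded curvature spiralling onto a circle has successive windings at arbitrarily small ambient distance, so normal rays issued from intrinsically far-apart points cross before any fixed time $T>0$; no uniform $T$ makes $\gamma$ injective. Bounded second fundamental form controls focal points of the ray congruence (your non-degeneracy of $(\dot\gamma_{s},\dot\gamma_{t})$), but not self-intersections of the net coming from distant parts of $N$; for that one needs $N$ compact, or properly embedded with a uniform tubular neighbourhood. You half-acknowledge this but phrase it as a focal-point issue, which it is not. To be fair, this is a defect of the proposition's hypotheses as much as of your argument --- the paper's own text neither states the needed extra assumption nor addresses the point at all.
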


\begin{remark}
The rays of a WF-net are usually not extremals for the $L$-energy because $\rho(s,t) =0$ is ususally not compatible with the unit speed condition in \eqref{eqWFray} in definition \ref{defNet}. Moreover, it is important to note that a given $(N, t_{0})$-based WF net may be quite different from an $(N, t_{1})$-based WF net for different initial times $t_{0}$ and $t_{1}$ -- see example \ref{exampZermelo}.
\end{remark}

\begin{proof}[Proof of theorem \ref{thmMain}]
 For transparency, and mostly in order to relate directly to the key 2D examples that we display below, we will assume that $M = \mathbb{R}^{2}$, i.e. $M$ is identical to its chart with coordinates $(u,v)$ in $\mathbb{R}^{2}$. This is done without much lack of generality since in the general setting we are only concerned with the semi-local aspects of the $N$-based nets in question. Moreover, the expressions and arguments in this proof generalize easily to higher dimensions.

Suppose first that the net satisfies \eqref{eqCE}.
Since each ray in the net by assumption  has unit $F$-speed, the specific variation $V = \dot{\gamma}_{s}$ determined by the rays in the net will give $\mathcal{L}^{\prime} = 0$ for all values of $b$ in the variational formula \eqref{eqLderiv} because each ray in the variation has constant length:
\begin{equation} \label{eqLderiv2}
\begin{aligned}
0 = \mathcal{L}'(0) &=
- \int_{t_{0}}^{b}\,g_{j\,k}\,\left(\ddot{c}^{\,j} + 2G^{j}(\dot{c}) + N_{0}^{j}(\dot{c}) \right)V^{k} \, dt \\
 &\phantom{aaaaaaa}+\sum_{i=1}^{m} \left[g_{j\,k}\,\dot{c}^{\,j}\, V^{k}\right]_{t_{i-1}}^{t_{i}} \quad .
\end{aligned}
\end{equation}
Since, also by assumption
\begin{equation}
\ddot{c}^{\,j} + 2G^{j}(\dot{c}) + N_{0}^{j}(\dot{c}) = \rho(t) \cdot \dot{c}^{j}
\end{equation}
for some function $\rho$ we get for all $b \in \, ]t_{0}, T[$:
\begin{equation} \label{eqInth}
0 = - \int_{t_{0}}^{b}\, m(t)\cdot \rho(t) \, dt + m(b) \quad ,
\end{equation}
where $m$ is shorthand for
\begin{equation} \label{eqDefh}
m(t) = \,g_{j\,k}\, \dot{c}^{j}(t) \, V^{k}(t)\quad .
\end{equation}
By assumption on the net  we have $m(t_{0}) = 0$ and from \eqref{eqInth} we get upon differentiation with respect to $b$:
\begin{equation}
m'(b) =  m(b)\cdot \rho(b) \quad \textrm{for all} \quad b \in \, ]t_{0}, T[ \quad.
\end{equation}
It follows that $m(t) = 0$ for all $t$ and therefore -- by \eqref{eqDefh} -- the variation vector field $V(t) = \dot{\gamma}_{s}(s,t)$ is $F$-orthogonal to $\dot{c}^{j}(t)\partial_{j} = \dot{\gamma}_{t}(s,t)$. Hence the net also satisfies equation \eqref{eqHO}.\\

Conversely, suppose that $\gamma(s,t)$ satisfies \eqref{eqHO}. Again we let $V$ denote any net-induced variation vector field along a given base ray $\gamma(s_{0},t) = c(t)$, which again satisfies the following equation for all $b$, since $V$ is assumed to be $F$-orthogonal to $\dot{c}$:
\begin{equation} \label{eqLderiv2}
0 = \mathcal{L}'(0) =
- \int_{t_{0}}^{b}\,g_{j\,k}\,\left(\ddot{c}^{\,j} + 2G^{j}(\dot{c}) + N_{0}^{j}(\dot{c}) \right)V^{k} \, dt  \quad .
\end{equation}
Since the integrand is thence identically $0$, the vector
\begin{equation}
\sum_{j}\left(\ddot{c}^{\,j} + 2G^{j}(\dot{c}) + N_{0}^{j}(\dot{c}) \right)\partial_{j}
\end{equation}
is $F$-orthogonal to the variation field $V$ and hence parallel to $\dot{c}(t) = \dot{\gamma}_{t}$. It follows, that the given net satisfying \eqref{eqHO} also satisfies \eqref{eqCE}.
\end{proof}

\begin{remark} \label{remEnPass}
We note en passant that if the Lagrangean $L$ is time-independent, i.e. if $(M, L)$ is a so-called regular scleronomic Lagrangean manifold, then $F$ is also time-independent and the rays of any WF net in $(M, L)$ are geodesics for the metric $F$. Indeed, in this case the WF net condition \eqref{eqCE} is precisely, that the rays are pre-geodesics of constant $F$-speed $1$, hence they are geodesics.
\end{remark}

\section{Frozen Finsler metrics} \label{secFrozen}

\begin{definition}
Let $\gamma(s,t)$ denote an $(N, t_{0})$-based  WF net in $(M, F)$ with the image $\mathcal{U} = \gamma(N \times \, ]t_{0}, T[\,)$. Then \emph{the frozen Finsler metric} $\widehat{F}$ on $\mathcal{U} \subset M$ is defined as the following \emph{time-independent (scleronomic) Finsler metric}:
\begin{equation}
\widehat{F}(u, v, x, y) = F(t(u,v), u, v, x, y) \quad \textrm{for all} \quad (u,v) \in \, \mathcal{U} \quad.
\end{equation}
The metric $\widehat{F}$ is called the \emph{frozen metric} associated to $F$ induced by the $(N, t_{0})$-based WF-net.
\end{definition}

We show that the rays of a WF net are geodesics of the frozen metric:

\begin{theorem} \label{thmFrozenGeodesics}
Let $\gamma(s,t)$ denote an $(N, t_{0})$-based WF-net in $(M, L)$ with associated frozen metric $\widehat{F}$ in the domain $\mathcal{U}$. Then the rays $\gamma(s_{0},t)$ of $\gamma$ are geodesic curves of $\widehat{F}$ so that the given WF-net in $(M, L)$ is also a WF-net in $(M, \widehat{F})$.
\end{theorem}

\begin{proof}
This follows immediately from the fact that the given WF-net of $F$ is also a WF-net of $\widehat{F}$ since the Hamilton orthogonality conditions are locally the same with respect to both metrics in the net which itself coordinates the 'freeze' of $F$ to $\widehat{F}$. \\

We now obtain the same result from the respective first variation formulas, which is not surprising in view of their role in the establisment above of theorem \ref{thmMain}.
The freezing time function $t=t(u,v)$ has partial derivatives which satisfy:
\begin{equation}
1 = \frac{\partial t}{\partial u}\,\dot{u}(t) + \frac{\partial t}{\partial v}\,\dot{v}(t)
\end{equation}
Moreover, since the gradient of $t$ in the Euclidean coordinate domain $\mathbb{R}^{2}$ is Euclidean-orthogonal to the frontal
\begin{equation} \label{eqTimeFrontal}
\eta_{t_{1}} =  \{ (u,v) \in \mathcal{U} \, | \,  t(u,v) = t_{1}\, \} \quad .
\end{equation}
we have:
\begin{equation}
0 = \frac{\partial t}{\partial u}\,W^{1}(u_{0},v_{0}) + \frac{\partial t}{\partial v}\,W^{2}(u_{0},v_{0}) \quad ,
\end{equation}
for any vector $W \in T_{(u_{0},v_{0})}\,\mathcal{U}$ which is tangent to the level set $\eta_{t_{0}}$ in $\mathbb{R}^{2}$.
In consequence we have for each vector $W$ which is $F$-orthogonal to $\dot{\gamma}$ (and therefore tangent to the $t_{0}$ level):
\begin{equation} \label{eqGradientOrto}
g_{j\,k}[{F}^{2}]_{t}\,\frac{\partial{t}}{\partial u^{l}}\, g^{j\, l}\, W^{k} = \frac{\partial{t}}{\partial u^{k}}\, W^{k} = 0 \quad.
\end{equation}
Since $\gamma(s,t)$ is a Hamilton orthogonal net, equation \eqref{eqGradientOrto}  means that the vector with coordinates
$g^{j\,i}[{F}^{2}]_{t}\,\frac{\partial{t}}{\partial u^{i}}$ is proportional to $\dot{\gamma}$.

The chain rule implies:
\begin{equation}
[\widehat{F}^{2}]_{u^{l}} = [{F}^{2}]_{u^{l}} + [{F}^{2}]_{t}\frac{\partial{t}}{\partial u^{l}} \quad ,
\end{equation}
so that we also have:
\begin{equation}
\begin{aligned}
&[\widehat{F}^{2}]_{u^{k}\, x^{l}}\,x^{k} - [\widehat{F}^{2}]_{u^{l}} \\
&= [{F}^{2}]_{u^{k}\, x^{l}}\,x^{k} + [{F}^{2}]_{t\, x^{l}}\,x^{k}\,\frac{\partial{t}}{\partial u^{k}}
- [{F}^{2}]_{u^{l}} - [{F}^{2}]_{t}\,\frac{\partial{t}}{\partial u^{l}} \\
&= [{F}^{2}]_{u^{k}\, x^{l}}\,x^{k} + [{F}^{2}]_{t\, x^{l}}
- [{F}^{2}]_{u^{l}} - [{F}^{2}]_{t}\,\frac{\partial{t}}{\partial u^{l}}  \quad .
\end{aligned}
\end{equation}

We insert these informations into the expression $\ddot{\gamma}^{\,j} + 2\widehat{G}^{j}$ for the time-independent metric $\widehat{F}$ and obtain:

\begin{equation}
\begin{aligned}
 \ddot{\gamma}^{\,j} + 2\widehat{G}^{j}
 &= \ddot{\gamma}^{\,j} + \left(\frac{1}{2}\right)\widehat{g}^{i\,l}\left([ \widehat{F}^{2}]_{u^{k}\,x^{l}}x^{k} - [ \widehat{F}^{2}]_{u^{l}} \right) \\
 &= \ddot{\gamma}^{\,j} + \left(\frac{1}{2}\right){g}^{i\,l}\left(
 [{F}^{2}]_{u^{k}\, x^{l}}\,x^{k} + [{F}^{2}]_{t\, x^{l}}
 - [{F}^{2}]_{u^{l}} \right)\\
 &\phantom{\ddot{\gamma}^{\,j} + + }- \left(\frac{1}{2}\right){g}^{i\,l}[{F}^{2}]_{t}\,\frac{\partial{t}}{\partial u^{l}}\\
&=\ddot{\gamma}^{\,j} + 2{G}^{j} + N_{0}^{j} - \left(\frac{1}{2}\right){g}^{i\,l}[{F}^{2}]_{t}\,\frac{\partial{t}}{\partial u^{l}}  \quad,
 \end{aligned}
\end{equation}
which is the L-extremal 'curvature' of $\gamma$ with respect to $F$ -- except for the last term which we know is proportional to $\dot{\gamma}_{t}$. By assumption on the WF-net, the vector $\sum_{j}(\ddot{\gamma}^{\,j} + 2{G}^{j} + N_{0}^{j})\partial_{j}$  is  proportional to $\dot{\gamma}_{t}$, so that $\sum_{j}(\ddot{\gamma}^{\,j} + 2\widehat{G}^{j})\partial_{j}$ is also proportional to $\dot{\gamma}_{t}$. Hence $\gamma$ is a pre-geodesic curve with respect to $\widehat{F}$. Since the length of $\dot{\gamma}_{t}$ is constrained
 to $1$ by the WF net condition, the curve $\gamma$ must be a geodesic in the metric $\widehat{F}$ -- see e.g. \cite[Exercise 5.3.2]{BCS}.
\end{proof}


\section{Rheonomic Randers metrics in $\mathbb{R}^{2}$} \label{secRanders}


A rheonomic Randers metric in $M = \mathbb{R}^{2}$ is  represented by its instantaneous \emph{elliptic} indicatrix fields.
The representing ellipse field $\mathcal{I}_{t,p} = E_{(t,u,v)}$ is parametrized as follows in the tangent space basis $\{\partial_{u}, \partial_{v}\}$ at $(u,v)$ in the parameter domain:
\begin{equation}
E_{(t,u,v)}(\psi) =  R_{\theta(t,u,v)}\left(\left[
                           \begin{array}{c}
                             a(t,u,v)\cos(\psi) \\
                             b(t,u,v)\sin(\psi) \\
                           \end{array}
                         \right] + \left[
                                     \begin{array}{c}
                                       c_{1}(t,u,v) \\
                                       c_{2}(t,u,v) \\
                                     \end{array}
                                   \right]\right)
\end{equation}
where $R_{\theta(t,u,v)}$ denotes the rotation in the tangent plane at $(u,v)$ by the angle $\theta(t,u,v)$ in the \emph{clock-wise direction}, see figure \ref{figDropField3}:
\begin{equation}
R_{\theta(t,u,v)} = \left[
                    \begin{array}{cc}
                      \cos(\theta(t,u,v)) & \sin(\theta(t,u,v)) \\
                      -\sin(\theta(t,u,v)) & \cos(\theta(t,u,v)) \\
                    \end{array}
                  \right]
\end{equation}
\begin{remark}
The choice of orientation of $\theta$ (turning in the clockwise direction) is customary in the field of wildfires, see \cite{Richards1990}, \cite{Markvorsen2016}.
\end{remark}

The translation vector $C(t,u,v)= (c_{1}(t,u,v), c_{2}(t,u,v))$ must always be assumed to be sufficiently small so that the resulting rotated and translated ellipse contains the origin of the tangent plane of the parameter domain at the point $(u,v)$, i.e. so that the ellipse with its origin becomes a pointed oval in the sense of general Finsler indicatrices.\\

The time-dependent Finsler metric induced from time-dependent Zermelo data is now obtained as follows -- see \cite{BRS}, \cite{Markvorsen2016}:
Let $h$ denote the Riemannian metric with the following component matrix with respect to the standard basis $\{\partial_{u} , \partial_{v} \}$ in every $T_{(u,v)}\mathbb{R}^{2}$:
\begin{equation} \label{eqZermelo}
h = \frac{1}{a^{2}b^{2}} \left[
      \begin{array}{cc}
       a^{2}\sin^{2}(\theta) + b^{2}\cos^{2}(\theta)& (a^2 - b^{2})\sin(\theta)\cos(\theta) \\
       (a^2 - b^{2})\sin(\theta)\cos(\theta)  & a^{2}\cos^{2}(\theta) + b^{2}\sin^{2}(\theta) \\
      \end{array}
    \right] \quad .
\end{equation}

 For elliptic template fields the direct way to the Finsler metric from the Zermelo data is as follows:
 Suppose for example that we are given ellipse field data $a(t,u,v)$, $b(t,u,v)$, $C(t,u,v) = (c_{1}(t,u,v), c_{2}(t,u,v))$, and $\theta(t,u,v)$.
Then the corresponding Finsler metric $F$ is determined by the following expression, where $V = (x,y)$ denotes any vector in  $\in T_{(u,v)}\mathbb{R}^{2}$ -- see e.g. \cite[Section 1.1.2]{BRS} and \cite{Markvorsen2016}:
\begin{equation}
\begin{aligned}
F &= F_{t} = F(t, p, V) = F(t, u,v,x,y) \\
&= \left(\frac{\sqrt{\lambda\cdot h(V,V) + h^{2}(V,C)}}{\lambda}\right) - \left(\frac{h(V,C)}{\lambda} \right) \, ,
\end{aligned}
\end{equation}
where
\begin{equation}
\lambda = 1 - h(C,C) > 0 \quad .
\end{equation}


\section{Richard's equations} \label{secRichards}

Richards observed in \cite{Richards1990} that Huyghens' principle for a spread phenomenon (in casu the spread of wildfires) in the plane together with a background indicatrix field of Zermelo type as discussed above forces the frontals of the spread to satisfy the PDE system of differential equations in theorem \ref{thmRichardsHamilton} below.

\begin{theorem} \label{thmRichardsHamilton}
We consider an $(N, t_{0})$ based WF net $\gamma(s, t)$ in $\mathbb{R}^{2}$ with a given ellipse template field for a Finsler metric $F$ with Zermelo equivalent data $a(t,u,v)$, $b(t,u,v)$, $C(t,u,v) = (c_{1}(t,u,v), c_{2}(t,u,v))$, and $\theta(t,u,v)$. Suppose that the net satisfies Huyghens' principle as by definition \ref{defHuyghensPrincip}. Then the net is determined by the following equations for the partial derivatives $\dot{\gamma}_{s}(s,t) = (\dot{u}_{s}, \dot{v}_{s})$ and $\dot{\gamma}_{t}(s,t) = (\dot{u}_{t}, \dot{v}_{t})$, where now, as indicated above, the control coefficients $a$, $b$, $c_{1}$, $c_{2}$, and $\theta$ are all allowed to depend on both time $t$ and position $(u,v)$:
\begin{eqnarray} \label{eqRichardsHamilton}
\dot{u}_{t} & = & \frac{a^{2}\cos(\theta)\left( \dot{u}_{s}\sin(\theta) + \dot{v}_{s}\cos(\theta) \right) - b^{2}\sin(\theta)\left( \dot{u}_{s}\cos(\theta) - \dot{v}_{s}\sin(\theta)  \right)}{\sqrt{a^{2}\left(\dot{u}_{s}\sin(\theta) + \dot{v}_{s}\cos(\theta)\right)^{2} + b^{2}\left(\dot{u}_{s}\cos(\theta) - \dot{v}_{s}\sin(\theta)\right)^{2}}} \nonumber \\
 & & + c_{1}\,\cos(\theta) + c_{2}\, \sin(\theta) \label{eq1} \quad , \quad \textrm{and}  \nonumber   \\ \nonumber\\
\dot{v}_{t} & = & \frac{-a^{2}\sin(\theta)( \dot{u}_{s}\sin(\theta) + \dot{v}_{s}\cos(\theta) ) - b^{2}\cos(\theta)( \dot{u}_{s}\cos(\theta) - \dot{v}_{s}\sin(\theta)  )}{\sqrt{a^{2}(\dot{u}_{s}\sin(\theta) + \dot{v}_{s}\cos(\theta))^{2} + b^{2}(\dot{u}_{s}\cos(\theta) - \dot{v}_{s}\sin(\theta))^{2}}} \nonumber  \\
& & - c_{1}\,\sin(\theta) + c_{2}\, \cos(\theta) \label{eq2}  \nonumber \quad .
\end{eqnarray}
\end{theorem}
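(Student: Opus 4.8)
The plan is to read off the system directly from the two geometric constraints that a Huyghens net must obey, using the explicit elliptic shape of the indicatrix to turn those constraints into the stated formulas. The two inputs are: (i) the unit-speed condition $F_t(\dot\gamma_t)=1$ from Definition \ref{defNet}, which forces the ray velocity $\dot\gamma_t(s,t)$ to lie on the instantaneous indicatrix $\mathcal I_{t,p}$; and (ii) Hamilton orthogonality $\dot\gamma_s \perp_{F_t}\dot\gamma_t$, which holds by Theorem \ref{thmHuyghens} since the net satisfies Huyghens' principle. Because the Zermelo data describe $\mathcal I_{t,p}$ as the ellipse $E_{(t,u,v)}(\psi)$ of Section \ref{secRanders}, constraint (i) lets me write $\dot\gamma_t = E_{(t,u,v)}(\psi)$ for some parameter value $\psi=\psi(s,t)$.

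The step I expect to carry the real content is the reinterpretation of Hamilton orthogonality as a tangency condition. By the homogeneity identity $g_{t,p,V}(V,W)=\tfrac12\,\frac{d}{d\lambda}[F_t^2(V+\lambda W)]_{|\lambda=0}$ of Section \ref{secRheonomicLagrange}, the relation $g_{\dot\gamma_t}(\dot\gamma_t,\dot\gamma_s)=0$ says exactly that $\dot\gamma_s$ annihilates the fiber differential of $F_t^2$ at $\dot\gamma_t$, i.e. that $\dot\gamma_s$ is tangent to the level set $\{F_t=1\}$ at the point $\dot\gamma_t$. Since that level set is the curve $\psi\mapsto E_{(t,u,v)}(\psi)$ and $\dot\gamma_t=E(\psi)$, this means $\dot\gamma_s$ is parallel to the ellipse tangent $E'(\psi)$, so $\dot\gamma_s = \mu\,E'_{(t,u,v)}(\psi)$ for some scalar $\mu=\mu(s,t)$. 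This is the algebraic incarnation of the envelope picture behind Huyghens' construction, and isolating it cleanly is the crux; the remainder is bookkeeping.

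It then remains to solve the pair $\dot\gamma_t = E(\psi)$ and $\dot\gamma_s = \mu E'(\psi)$ for $\dot\gamma_t$ in terms of $\dot\gamma_s$. I would apply the inverse rotation $R_\theta^{-1}=R_{-\theta}$ to the tangency relation to decouple the axes, abbreviate $P=\dot u_s\sin\theta+\dot v_s\cos\theta$ and $Q=\dot u_s\cos\theta-\dot v_s\sin\theta$, and read off $\sin\psi=-Q/(\mu a)$ together with $\cos\psi=P/(\mu b)$. Imposing $\cos^2\psi+\sin^2\psi=1$ then fixes $\mu=\pm\sqrt{a^2P^2+b^2Q^2}/(ab)$, with the sign determined by the requirement that the ray leave $N$ along its prescribed unit normal (flipping the sign sends $\psi\mapsto\psi+\pi$ and selects the opposite tangency point on the ellipse). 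Substituting $\cos\psi$, $\sin\psi$ and $\mu$ back into $\dot\gamma_t = R_\theta\big([a\cos\psi,\,b\sin\psi]^{\mathsf T}+C\big)$, and separating the rotated axis term from the rotated translation $R_\theta C=(c_1\cos\theta+c_2\sin\theta,\,-c_1\sin\theta+c_2\cos\theta)$, yields precisely the two equations for $\dot u_t$ and $\dot v_t$ in Theorem \ref{thmRichardsHamilton}.
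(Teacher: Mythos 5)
Your derivation is correct; I checked the algebra and it reproduces both stated equations exactly (with the positive choice of $\mu$, i.e.\ $\dot u_t = (a^2\cos\theta\, P - b^2\sin\theta\, Q)/\sqrt{a^2P^2+b^2Q^2} + c_1\cos\theta + c_2\sin\theta$ and the analogous second component, in your notation $P=\dot u_s\sin\theta+\dot v_s\cos\theta$, $Q=\dot u_s\cos\theta-\dot v_s\sin\theta$). Note, however, that the paper itself offers no proof of Theorem \ref{thmRichardsHamilton}: it is quoted as Richards' result from \cite{Richards1990}, where the system is obtained by a direct first-order envelope computation with small elliptical Huyghens droplets. Your route is different and arguably more in the spirit of the rest of the paper: you invoke Theorem \ref{thmHuyghens} to convert Huyghens' principle into Hamilton orthogonality, then use the homogeneity identity $g_{t,p,V}(V,W)=\tfrac12\tfrac{d}{d\lambda}[F_t^2(V+\lambda W)]_{|\lambda=0}$ to recognize $F$-orthogonality as tangency of $\dot\gamma_s$ to the indicatrix at the point $\dot\gamma_t$, after which the Zermelo ellipse parametrization makes everything explicit. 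This buys a conceptually clean argument whose only metric-specific input is the ellipse parametrization (it would work verbatim for any explicitly parametrized convex indicatrix field), whereas Richards' derivation is elementary but tied to the elliptical shape and to expanding the envelope condition by hand. Two small points to keep tight: (i) the tangency characterization needs $d(F_t^2)_{\dot\gamma_t}\neq 0$, which follows from Euler's relation $d(F_t^2)_V(V)=2F_t^2(V)=2$ on the indicatrix, so the level set is a genuine regular curve there; (ii) the sign of $\mu$ (equivalently $\psi$ versus $\psi+\pi$) is not fixed by orthogonality alone --- your resolution via the initial normal $n(s)$ and continuity in $t$ is the right one and matches the outward-propagating solution that the theorem (and Remark \ref{remConsist}) presupposes.
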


For the very special and rare cases where the control coefficients $a$, $b$, $c_{1}$, $c_{2}$, and $\theta$ are assumed to depend only on time $t$ and not on the position $(u,v)$ Richards was able to find the following explicit analytic solution to the equations in theorem \ref{thmRichardsHamilton} -- see \cite[Section 4.1]{Richards1993a}:

\begin{theorem} \label{thmRichardsExplicit}
For a rheonomic 2D Randers metric in $\mathbb{R}^{2}$ with no spatial dependence the solution to the equations in theorem \ref{thmRichardsHamilton} are given by the following expressions for $\gamma(s,t) = (u(s,t), v(s,t))$ with $(u(0), v(0))= (u_{0}, v_{0})$:
\begin{equation*}
u(s,t) =  u_{0} + \int_{0}^{t}\, f(r)  \,\,dr
+ \int_{0}^{t}\, c_{2}(r)\sin(\theta(r)) + c_{1}(r)\cos(\theta(r)) \,\,dr \quad ,
\end{equation*}
where
\begin{equation*}
f(r) = \frac{a^{2}(r)\cos(\theta(r))\cos(\theta(r) + s) + b^{2}(r)\sin(\theta(r))\sin(\theta(r)+s)}{\sqrt{a^{2}(r)\cos^{2}(\theta(r) + s) + b^{2}(r)\sin^{2}(\theta(r) + s)}} \quad ,
\end{equation*}
and
\begin{equation*}
v(s,t) =  v_{0} + \int_{0}^{t}\, g(r) \,\,dr
+ \int_{0}^{t}\, c_{2}(r)\cos(\theta(r)) - c_{1}(r)\sin(\theta(r)) \,\,dr \quad ,
\end{equation*}
where
\begin{equation*}
g(r) = \frac{-a^{2}(r)\sin(\theta(r))\cos(\theta(r) + s) + b^{2}(r)\cos(\theta(r))\sin(\theta(r)+s)}{\sqrt{a^{2}(r)\cos^{2}(\theta(r) + s) + b^{2}(r)\sin^{2}(\theta(r) + s)}}
\quad .
\end{equation*}
\end{theorem}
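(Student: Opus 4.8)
The plan is to verify by direct substitution that the stated pair $(u(s,t),v(s,t))$ solves the PDE system of Theorem \ref{thmRichardsHamilton}, with uniqueness then supplied by Proposition \ref{propExist}. In the spatially homogeneous case the control data $a,b,c_{1},c_{2},\theta$ are functions of the integration variable $r$ alone, so differentiating the formulas in $t$ is immediate from the fundamental theorem of calculus: I would obtain $\dot{u}_{t} = f + c_{1}\cos(\theta) + c_{2}\sin(\theta)$ and $\dot{v}_{t} = g + c_{2}\cos(\theta) - c_{1}\sin(\theta)$, where $f,g$ are the integrands displayed in the statement. These already match the translation parts of Richards' equations verbatim, so everything reduces to showing that the frontal derivatives $(\dot{u}_{s},\dot{v}_{s})$ feed the Randers numerators correctly.

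The crux is therefore to compute $\dot{u}_{s}$ and $\dot{v}_{s}$ and, above all, to identify their \emph{direction}. Writing $\phi = \theta(r)+s$ and $D = \sqrt{a^{2}\cos^{2}(\phi) + b^{2}\sin^{2}(\phi)}$, I would differentiate $f$ and $g$ under the integral sign in $s$. The main computational step is to establish, after expanding and repeatedly using $\cos^{2}(\phi)+\sin^{2}(\phi)=1$ together with the angle–subtraction identities, the clean derivative identities
\[ \partial_{s} f = -\frac{a^{2}b^{2}}{D^{3}}\sin(s), \qquad \partial_{s} g = \frac{a^{2}b^{2}}{D^{3}}\cos(s). \]
Setting $\Lambda(s,t) = \int_{0}^{t} a^{2}(r)b^{2}(r)/D(r,s)^{3}\, dr > 0$, this yields $(\dot{u}_{s},\dot{v}_{s}) = \Lambda(s,t)\,(-\sin(s),\cos(s))$. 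The essential structural point is that, although the magnitude $\Lambda$ depends on both $s$ and $t$, the \emph{direction} of $\dot{\gamma}_{s}$ is the fixed unit field $(-\sin(s),\cos(s))$; this is exactly what makes a closed-form solution possible.

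The final step exploits that the right-hand sides in Theorem \ref{thmRichardsHamilton} are positively homogeneous of degree zero in $(\dot{u}_{s},\dot{v}_{s})$: the numerators are homogeneous of degree one, the square-root denominators are likewise of degree one, and the translation terms do not involve $\dot{\gamma}_{s}$ at all. Hence the positive factor $\Lambda$ cancels and I may substitute the representative direction $\dot{u}_{s} = -\sin(s)$, $\dot{v}_{s} = \cos(s)$. With this substitution one has $\dot{u}_{s}\sin(\theta) + \dot{v}_{s}\cos(\theta) = \cos(\phi)$ and $\dot{u}_{s}\cos(\theta) - \dot{v}_{s}\sin(\theta) = -\sin(\phi)$, so the Randers numerators collapse by angle addition precisely to $f$ and $g$; adding back the translation terms reproduces the $t$-derivatives found above. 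Together with the initial condition $u(s,0)=u_{0}$, $v(s,0)=v_{0}$ — all integrals vanish at the ignition time $t=0$, where the frontal degenerates to the point $(u_{0},v_{0})$ and $\dot{\gamma}_{s}=0$ — this shows that the formulas solve the system, and Proposition \ref{propExist} identifies them as \emph{the} solution.

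I expect the main obstacle to be the bookkeeping in the derivative identities for $\partial_{s}f$ and $\partial_{s}g$: the intermediate expressions contain many trigonometric monomials in $\phi$ and $\theta$, and the clean cancellation down to $\mp(a^{2}b^{2}/D^{3})\sin(s)$ and $\pm(a^{2}b^{2}/D^{3})\cos(s)$ only emerges after systematically collecting terms and factoring out $\cos^{2}(\phi)+\sin^{2}(\phi)$. Once those identities and the resulting common direction are in hand, the degree-zero homogeneity observation disposes of the remainder with no further analysis.
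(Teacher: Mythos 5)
Your verification is correct, and there is in fact nothing in the paper to compare it against: the paper states Theorem \ref{thmRichardsExplicit} as Richards' result and simply cites \cite[Section 4.1]{Richards1993a}, offering no proof of its own; the only in-paper hint is Remark \ref{remConsist}, which asserts without computation that the formulas ``can be shown directly'' to satisfy the unit $F$-speed and Hamilton orthogonality conditions. Your argument is therefore a genuine, self-contained addition rather than a rederivation. The route you take is sound: the $t$-derivatives follow from the fundamental theorem of calculus; your key identities $\partial_{s}f=-(a^{2}b^{2}/D^{3})\sin s$ and $\partial_{s}g=(a^{2}b^{2}/D^{3})\cos s$ are correct (the $a^{4}$ and $b^{4}$ monomials cancel and the remaining terms collect via $\cos^{2}\phi+\sin^{2}\phi=1$ and $\sin(\phi-\theta)=\sin s$, $\cos(\phi-\theta)=\cos s$); hence $\dot{\gamma}_{s}$ has the $t$-independent direction $(-\sin s,\cos s)$ with positive factor $\Lambda(s,t)$ for $t>0$, and the degree-zero positive homogeneity of Richards' right-hand sides in $(\dot{u}_{s},\dot{v}_{s})$ legitimately lets you substitute that representative direction, after which the numerators collapse by angle addition to $f$ and $g$. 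Note that checking the PDE system of Theorem \ref{thmRichardsHamilton} directly is equivalent in content to the check suggested in Remark \ref{remConsist}, since those equations are precisely the Zermelo-coordinate expression of unit speed together with Hamilton orthogonality.

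The one loose joint is your uniqueness step. Proposition \ref{propExist} concerns $(N,t_{0})$-based WF nets for hypersurfaces $N$ with bounded second fundamental form; point ignition is covered only through the fattening construction sketched after Definition \ref{defNet}, and at $t=0$ the right-hand sides of Richards' equations are singular (with $\dot{\gamma}_{s}=0$ they read $0/0$), so ``the solution'' with a degenerate initial frontal is not literally an instance of that proposition. This does not undermine the substance of the proof --- the theorem's content is the exhibition of the explicit solution, and neither the paper nor Richards' citation supplies a uniqueness argument in this degenerate setting --- but the appeal to Proposition \ref{propExist} should be flagged as heuristic rather than as a direct application.
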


\begin{remark}\label{remConsist}
The solutions presented in theorem \ref{thmRichardsExplicit} -- and indirectly in theorem \ref{thmRichardsHamilton} -- can be shown directly to satisfy the unit $F$-speed condition and  the Hamilton orthogonality condition needed to form  $(p, 0)$-based WF nets with $p = (u_{0}, v_{0})$.
\end{remark}

We illustrate the construction of a frozen metric in a most simple example using the following rheonomic  metric:

\begin{example}\label{exampZermelo}
In $\mathcal{U}= \mathbb{R}^{2}$ we consider the time-dependent Riemannian metric for all $t\geq 0$:
\begin{equation}
F(t, u, v, x,y) = F(t, u^{1}, u^{2}, x^{1}, x^{2})
= \frac{\sqrt{x^2 + (y/2)^2}}{1+t}  \quad   .
\end{equation}
The corresponding Zermelo data are clearly the following:
\begin{equation}
\begin{aligned}
a(t,u,v) &= 1+t \\
b(t,u,v) &= 2 + 2t\\
C(t,u,v) &= (0, 0) \\
\theta(t,u,v) &= 0 \quad ,
\end{aligned}
\end{equation}

For the rheonomic metric $F$ we then have the following ingredients for the energy extremals and for the WF-ray equations:
\begin{equation}
\begin{aligned}
&\left[ F^{2}\right]_{u^{l}} = 0\\
&\left[ F^{2}\right]_{u^{k}\,x^{l}} = 0\\
&\left[ F^{2}\right]_{t} = -\,\frac{4x^{2} + y^{2}}{2(1 + t)^3}  \\
&\left[ F^{2}\right]_{t\,x^{1}} = \frac{-4x}{(1 + t)^3} \\
&\left[ F^{2}\right]_{t\,x^{2}} = \frac{-y}{(1 + t)^3} \\
&g_{1\,1} = \frac{1}{(1+t)^2}\\
&g_{2\,2} = \frac{1}{4(1+t)^2}\\
&g_{1\,2} = g_{2\,1} = 0\\
&G^{i} = 0\\
&N_{0}^{1}(t,u,v,x,y) = N_{0}^{1} = \frac{-2x}{1+t}\\
&N_{0}^{2}(t,u,v,x,y) = N_{0}^{2} = \frac{-2y}{1+t}\quad .
\end{aligned}
\end{equation}

The energy extremal equations are thence:
\begin{equation}
\begin{aligned}
\ddot{u}(t) + {N}_{0}^{1}(t, u(t), v(t), \dot{u}(t), \dot{v}(t)) &= 0 \\
\ddot{v}(t) + {N}_{0}^{2}(t, u(t), v(t), \dot{u}(t), \dot{v}(t)) &= 0  \quad ,
\end{aligned}
\end{equation}
or, equivalently:
\begin{equation}
\begin{aligned}
\ddot{u} -  \frac{2\dot{u}}{1+t} &= 0 \\
\ddot{v} -  \frac{2\dot{v}}{1+t} &= 0 \quad .
\end{aligned}
\end{equation}

The solutions to these full energy extremal equations issuing from $(u_{0}, v_{0}) = (0, 0)$ at time $t=0$ in the direction of the (non-unit vector) $(\dot{u}(0), \dot{v}(0)) = (\cos(s), \sin(s))$ are the following parametrized radial half lines:
\begin{equation}
\begin{aligned}
u(s,t) &= \left(\frac{2}{3}t^{3} + 2t^{2} + 2t \right) \frac{\cos(s)}{\sqrt{1 + 3\cos^{2}(s)}}\\
v(s,t) &=  \left(\frac{2}{3}t^{3} + 2t^{2} + 2t\right) \frac{\sin(s)}{\sqrt{1 + 3\cos^{2}(s)}}\quad .
\end{aligned}
\end{equation}
As expected, the rays of this solution do not have constant unit speed:
\begin{equation}
F(t, u, v, \dot{u}, \dot{v}) = 1+t \quad .
\end{equation}

The WF net rheonomic equations \eqref{eqCE}, however, give the correct solutions with ${F}(t, u, v, \dot{u}, \dot{v}) = 1$ for all $s$ and $t$:
\begin{equation} \label{eqCEsol1}
\begin{aligned}
u(s,t) &= \left(t^{2} + 2t\right) \frac{\cos(s)}{\sqrt{1 + 3\cos^{2}(s)}}\\
v(s,t) &=  \left(t^{2} + 2t\right) \frac{\sin(s)}{\sqrt{1 + 3\cos^{2}(s)}} \quad .
\end{aligned}
\end{equation}

From this solution we can now extract the time function:
\begin{equation}
t = t(u,v) =  -1 + \sqrt{1 + \sqrt{4u^{2} + v^{2}}}\quad,
\end{equation}
and insert it into the rheonomic Finsler metric to obtain the corresponding frozen Finsler metric $\widehat{F}$ as follows:
\begin{equation}
\widehat{F}(u,v,x,y) = F(t(u,v),u,v,x,y) = \frac{\sqrt{4x^2 + y^2}}{2\sqrt{1 + \sqrt{4u^{2} + v^{2}}}} \quad.
\end{equation}
For this frozen metric we have correspondingly for its WF-net geodesic equations:
\begin{equation}
\begin{aligned}
&\widehat{G}^{1}(u,v,x,y) = \frac{u\,(-4x^{2}+ y^{2}) - 2v\,x\,y}{4\left(1 + \sqrt{4u^{2} + v^{2}}\right)\sqrt{4u^{2} + v^{2}}}\\
&\widehat{G}^{2}(u,v,x,y) = \frac{v\,(4x^{2}- y^{2}) - 8u\,x\,y}{4\left(1 + \sqrt{4u^{2} + v^{2}}\right)\sqrt{4u^{2} + v^{2}}} \quad .
\end{aligned}
\end{equation}

The 'frozen' geodesics for $\widehat{F}$ satisfies the equations:
\begin{equation}
\begin{aligned}
\ddot{u}(t) + 2\widehat{G}^{1}(u(t), v(t), \dot{u}(t), \dot{v}(t)) &= 0 \\
\ddot{v}(t) + 2\widehat{G}^{2}(u(t), v(t), \dot{u}(t), \dot{v}(t)) &= 0 \quad .
\end{aligned}
\end{equation}

These equations are solved by the same rays as presented in equation \eqref{eqCEsol1}.
Also they are the same rays as those obtained from Richards' recipe in theorem \ref{thmRichardsExplicit} which are parametrized as follows:
\begin{equation} \label{eqRichardsol}
\begin{aligned}
\tilde{u}(\tilde{s},t) &= \left(\frac{t^{2}}{2} + t\right) \frac{\cos(\tilde{s})}{\sqrt{4 - 3\cos^{2}(\tilde{s})}}\\
\tilde{v}(\tilde{s},t) &=  \left(2t^{2} + 4t\right) \frac{\sin(\tilde{s})}{\sqrt{4 - 3\cos^{2}(\tilde{s})}} \quad .
\end{aligned}
\end{equation}
The only difference is due to a different choice of parametrization of directions from $p$. Indeed, if we transform $\tilde{s}$ to $s$ via the consistent equations
\begin{equation}
\begin{aligned}
 \frac{\cos(\tilde{s})}{\sqrt{4 - 3\cos^{2}(\tilde{s})}} &= \frac{2\cos(s)}{\sqrt{1 + 3\cos^{2}(s)}}  \\
  \frac{2\sin(\tilde{s})}{\sqrt{4 - 3\cos^{2}(\tilde{s})}} &= \frac{\sin(s)}{\sqrt{1 + 3\cos^{2}(s)}}  \quad ,
 \end{aligned}
\end{equation}
then the two WF net solutions \eqref{eqRichardsol} and \eqref{eqCEsol1} do agree.\\

Now, in comparison, we may consider instead the WF net geodesics obtained by starting at $(0,0)$ at the \emph{later time} $t_{0}=1$ in the same rheonomic metric $F$ as above. A calculation along the same lines as above now gives the new time function:
\begin{equation}
t = t(u,v) =  -1 + \sqrt{4 + \sqrt{4u^{2} + v^{2}}} \quad .
\end{equation}
Insertion of this new time function into the given rheonomic metric $F$ now gives the new frozen metric:
\begin{equation}
\widetilde{F}(u,v,x,y) = \frac{\sqrt{4x^{2} + y^{2}}}{2\sqrt{4 + \sqrt{4u^{2} + v^{2}}}}
\end{equation}
with the following frozen geodesics -- with $(u(1), v(1)) = (0,0)$:
\begin{equation}
\begin{aligned}
\widetilde{\gamma}(s,t) &= (u(s,t), v(s,t))\\
&= \left( \frac{(t^2 + 2t -3)\cos(s)}{\sqrt{1 + 3\cos^{2}(s)}}\, , \, \, \frac{(t^2 + 2t -3)\sin(s)}{\sqrt{1 + 3\cos^{2}(s)}} \right) \quad .
\end{aligned}
\end{equation}
The geodesic curves of the frozen metric are the same straight line curves as before -- only now with a reparametrization in the $t$-direction. Observe that the new parametrization $t^2 + 2t -3$ is not simply obtained by inserting $t-1$ in place of $t$ in the old parametrization $t^2 + 2t$ . The frozen metric is clearly different -- with an extra $4$ in the denominator square root.
\end{example}

Two somewhat more complicated examples of rheonomic metrics in $\mathbb{R}^{2}$ are considered in the next example:

\begin{example}\label{exampZermeloSimp}
We present two simple cases of time-dependent Randers metrics in $\mathbb{R}^{2}$. Their respective time-dependent indicatrix fields are indicated in figures  \ref{figDropField3} and \ref{figDropField4}:
The Zermelo data for figure \ref{figDropField3} are as follows:
\begin{equation} \label{eqZermelo1}
\begin{aligned}
a(t,u,v) &= 1 \\
b(t,u,v) &= 2 + t/5\\
C(t,u,v) &= (0, 0) \\
\theta(t,u,v) &= ((t+5)+u-v)/20 \quad .
\end{aligned}
\end{equation}
The Zermelo data for figure \ref{figDropField4}, which depend on time only, are the same as for  figure \ref{figDropField3}, except that the $u$ and $v$ dependence has been removed (from the rotation angle $\theta$) so that:
\begin{equation} \label{eqZermelo2}
\begin{aligned}
a(t,u,v) &= 1\\
b(t,u,v) &= 2+t/5\\
C(t,u,v) &= (0,0)\\
\theta(t,u,v) &= (t+5)/20\quad .
\end{aligned}
\end{equation}
\end{example}

\begin{figure}[h!]
\centerline{
\includegraphics[width=50mm]{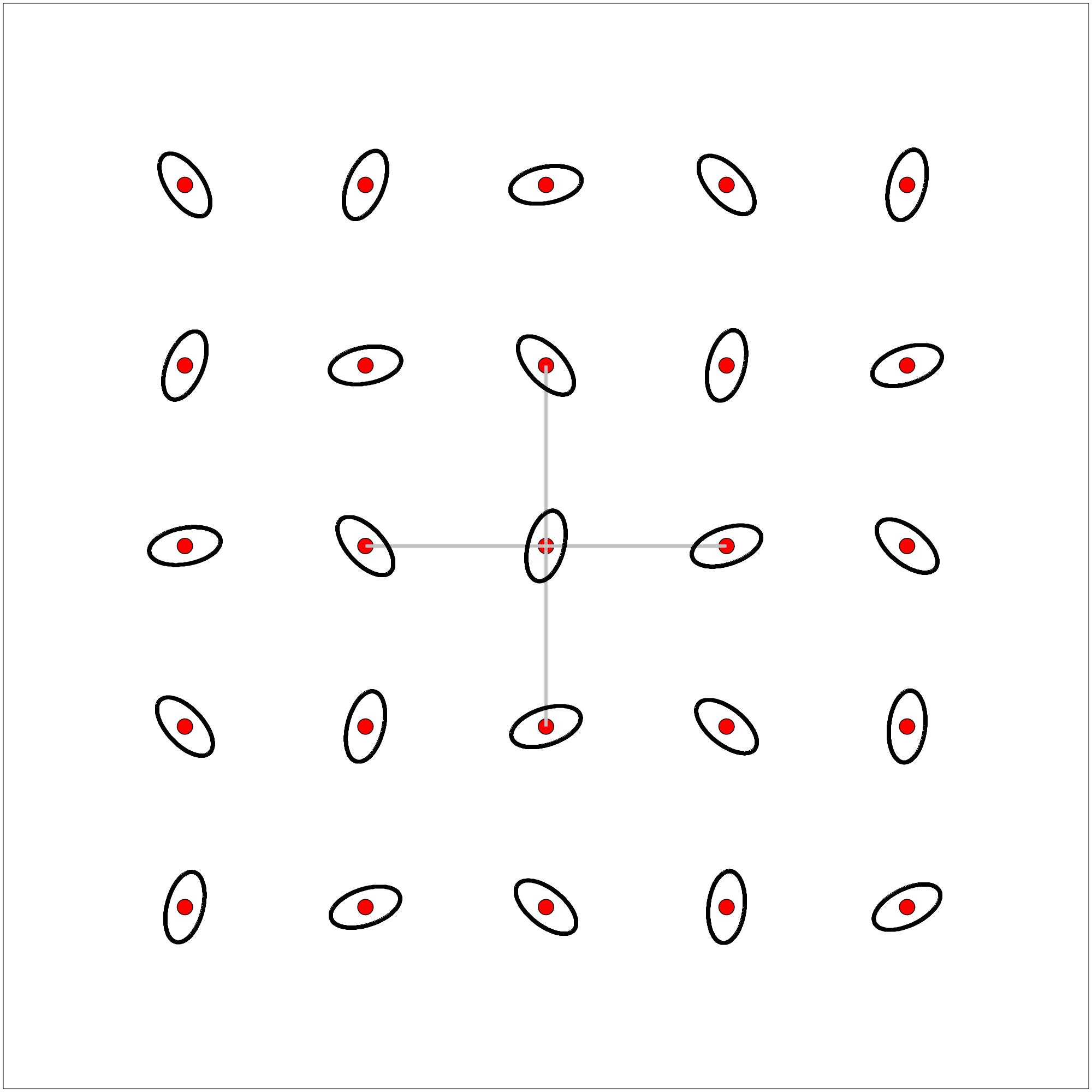}\quad
\includegraphics[width=50mm]{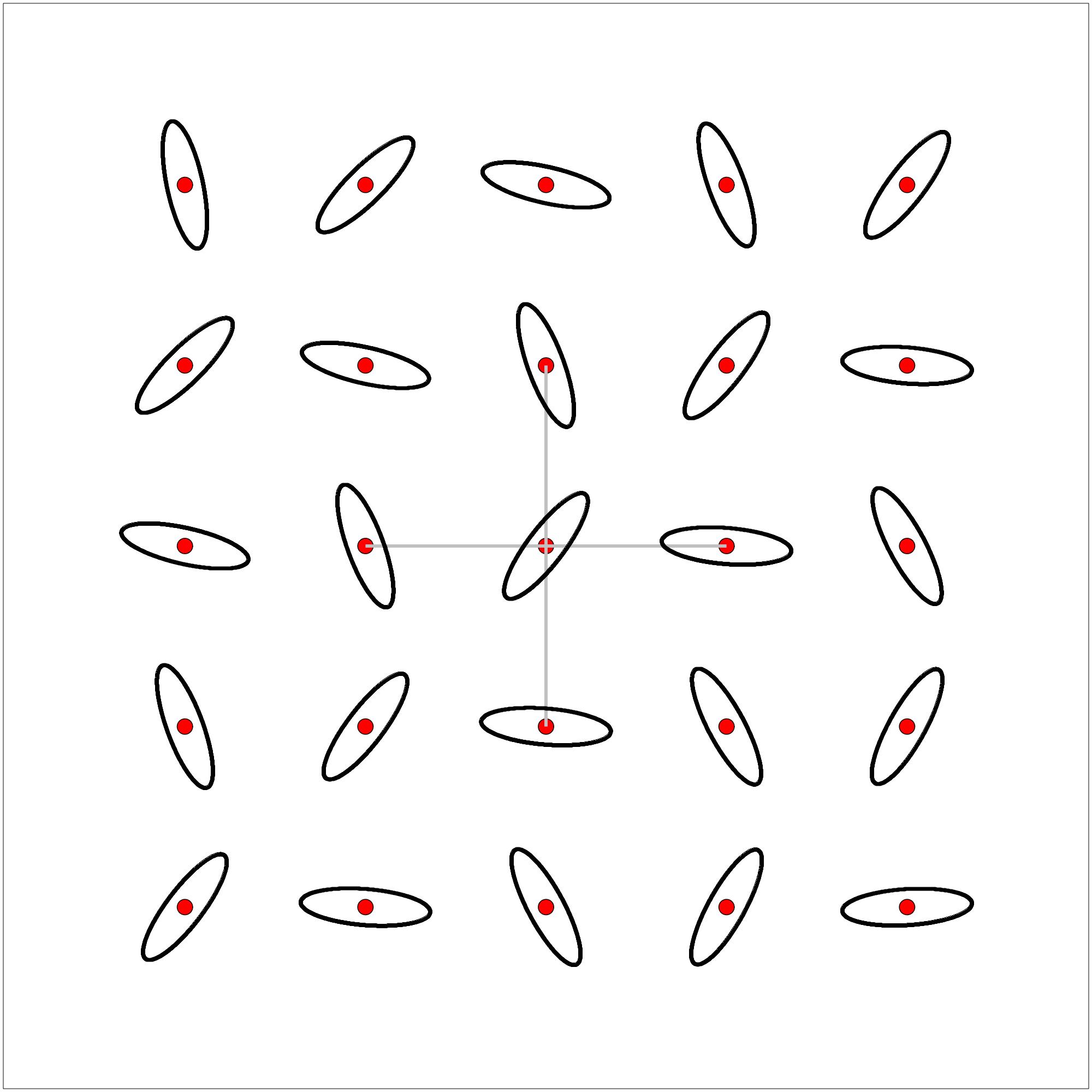}\quad
\includegraphics[width=50mm]{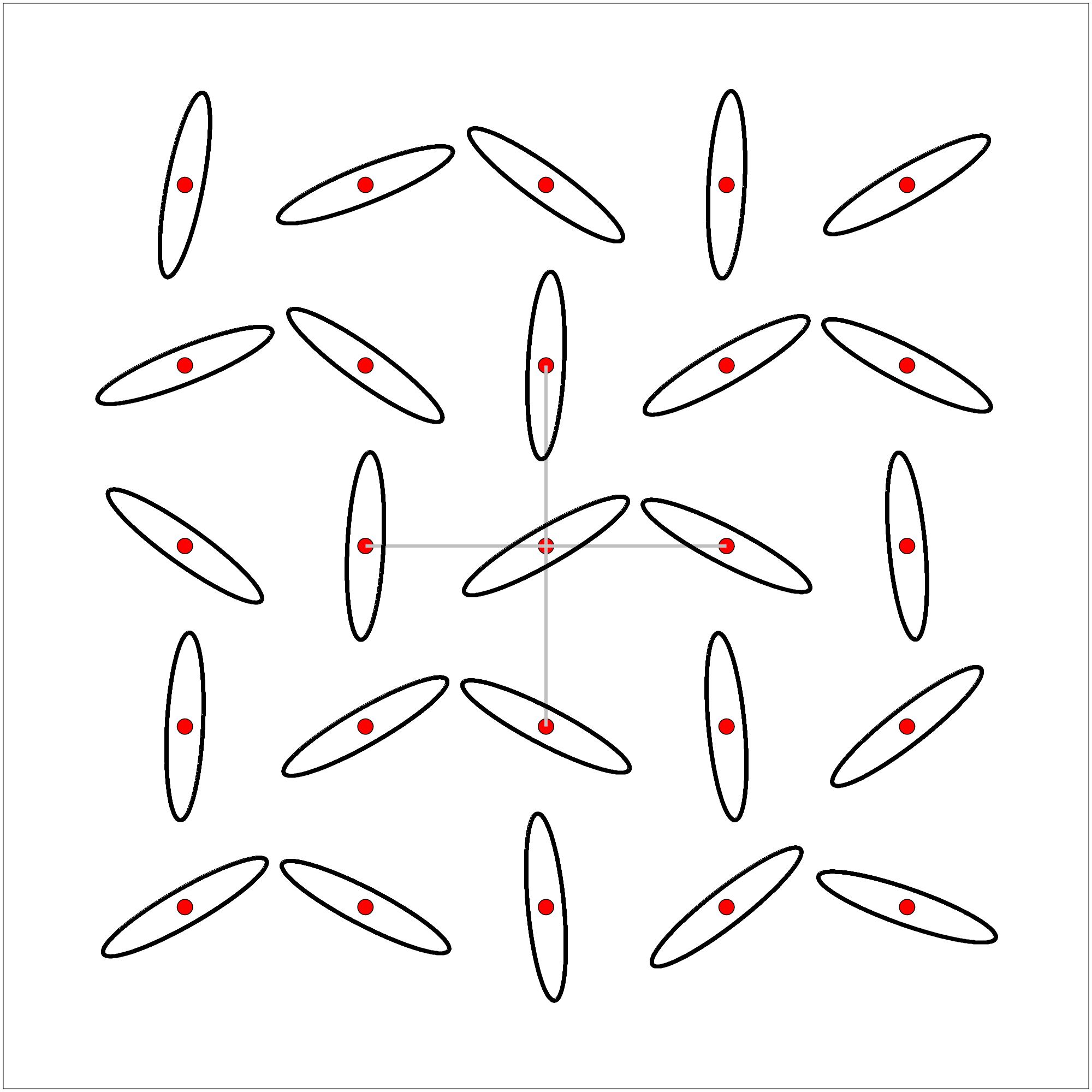} }
\begin{center}
\caption{{The indicatrix field from the Zermelo data in  \eqref{eqZermelo1} at three different times, $t=0$, $t=8$, and $t=16$ (from left to right). The spatial dependence is determined solely via the Zermelo rotation angle $\theta(t,u,v)$ in \eqref{eqZermelo1}.}} \label{figDropField3}
\end{center}
\end{figure}

\begin{figure}[h!]
\centerline{
\includegraphics[width=50mm]{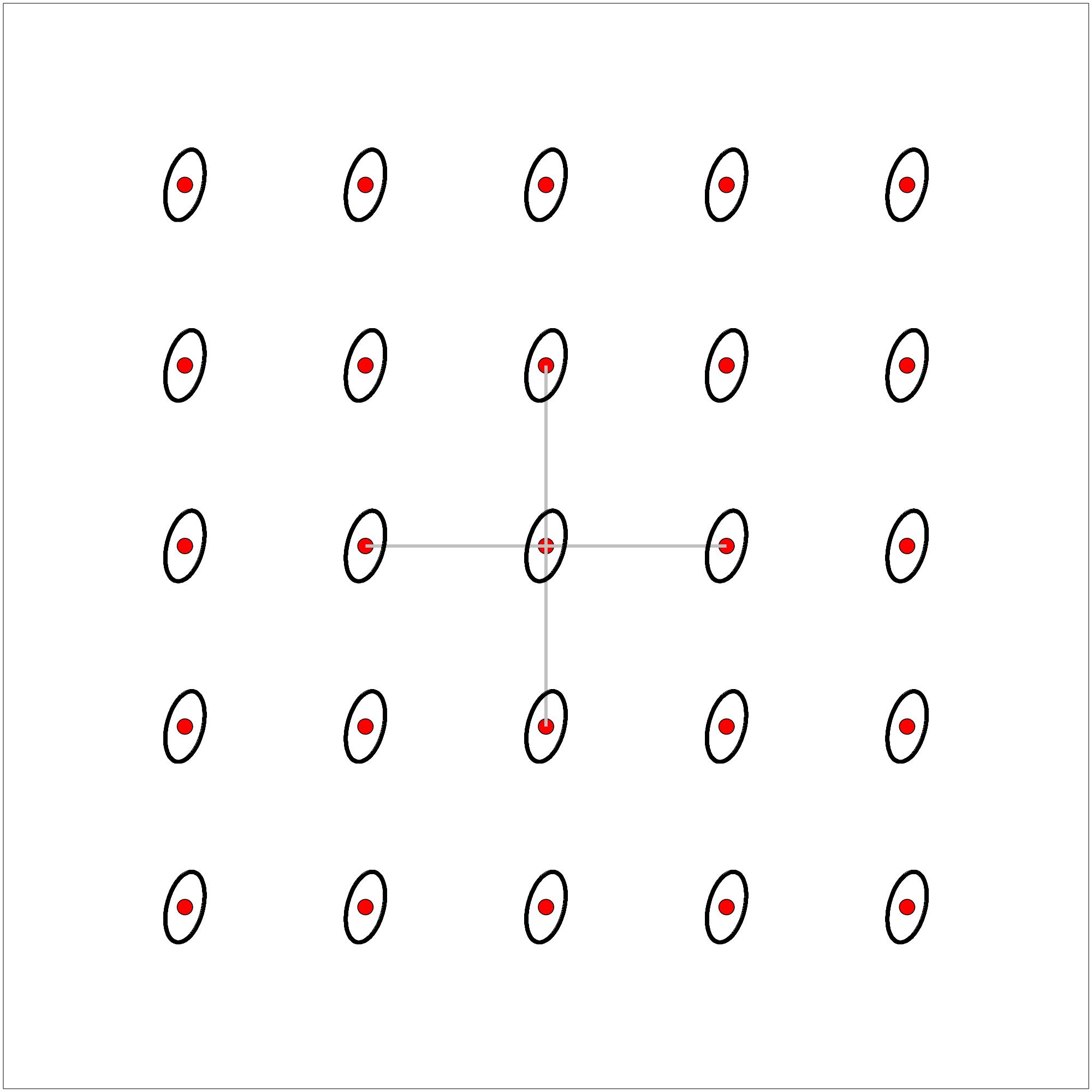} \quad
\includegraphics[width=50mm]{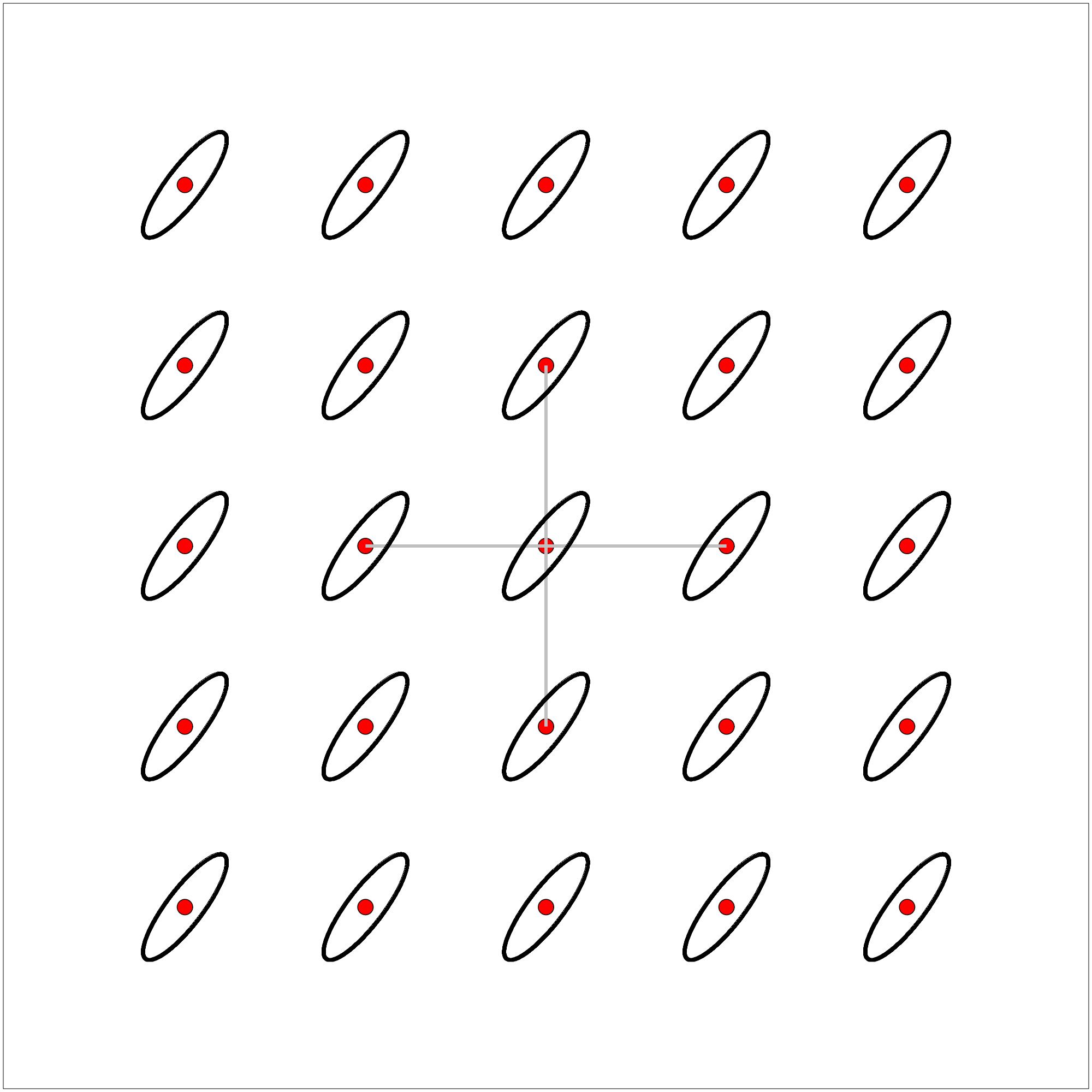}\quad
\includegraphics[width=50mm]{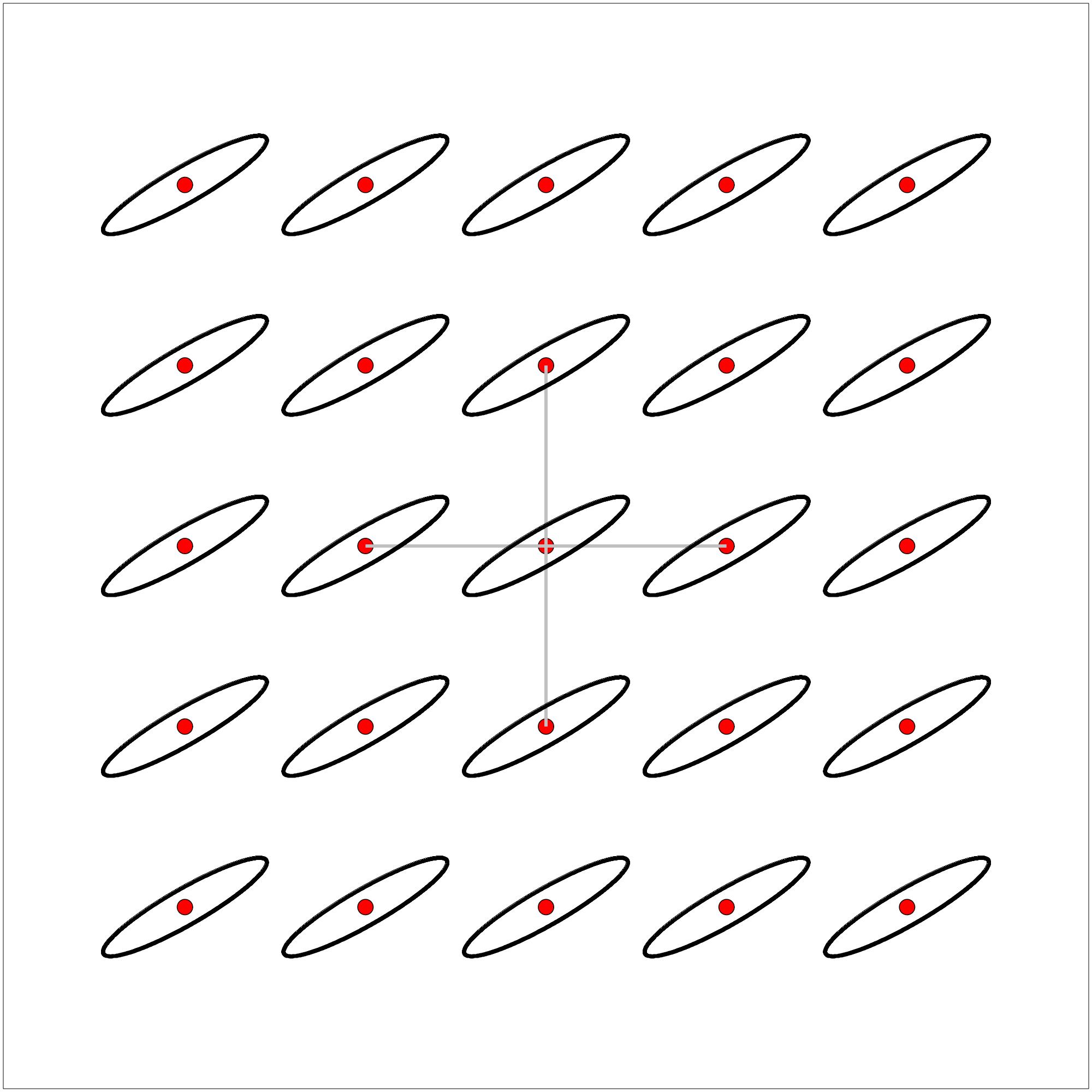} }
\begin{center}
\caption{The indicatrix field from the Zermelo data in \eqref{eqZermelo2} at three different times, $t=0$, $t=8$, and $t=16$ (from left to right). The field is clearly spatially constant in each time-slice.} \label{figDropField4}
\end{center}
\end{figure}

For these two relatively simple fields we obtain -- via numerical solutions -- the $(p, 0)$-based WF nets displayed in the figures \ref{figSolutionNet3} and \ref{figSolutionNet4}, respectively, with ignition at point $p = (0,0)$ at time $t=0$. The displayed frontals are then obtained at time values $t_{i} = 0.2\cdot 16 \cdot i$, where $i = 1, \cdots, 5$ so that the outermost frontal is the time-level set $\eta_{t_{5}} = \eta_{16}$. In figure \ref{figHuyghens34} is shown also the two sets of Huyghens droplets of duration $0.2\cdot 16$ ignited from points on the two respective level sets  $\eta_{t_{4}}$ at the corresponding time $t_{4}$. In the two cases under consideration the Huyghens droplets clearly show a tendency to envelope the outer frontal $\eta_{16}$ -- cf. remark \ref{remHuyghensRev}.

\begin{figure}[h]
\centerline{
\includegraphics[width=60mm]{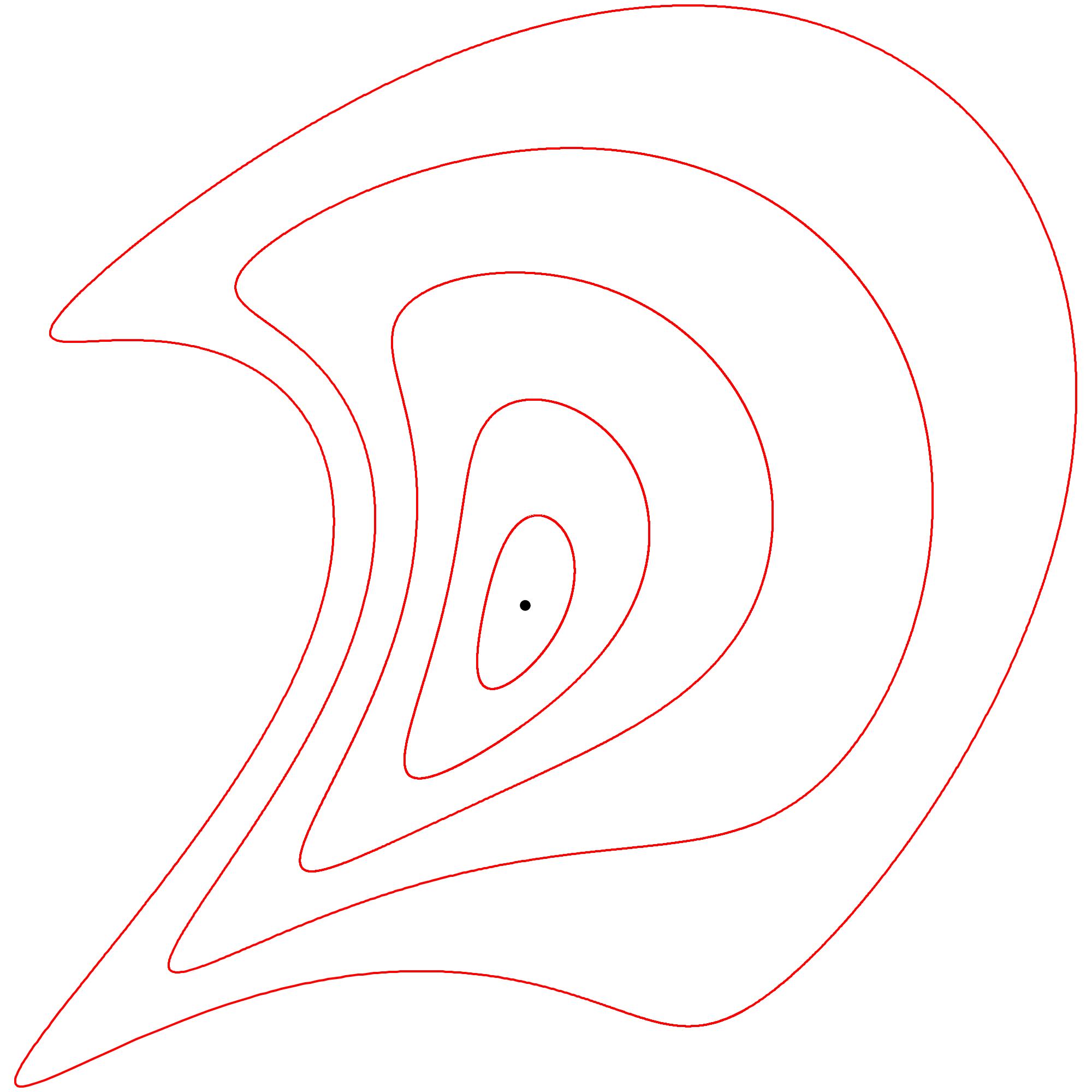}\quad \includegraphics[width=60mm]{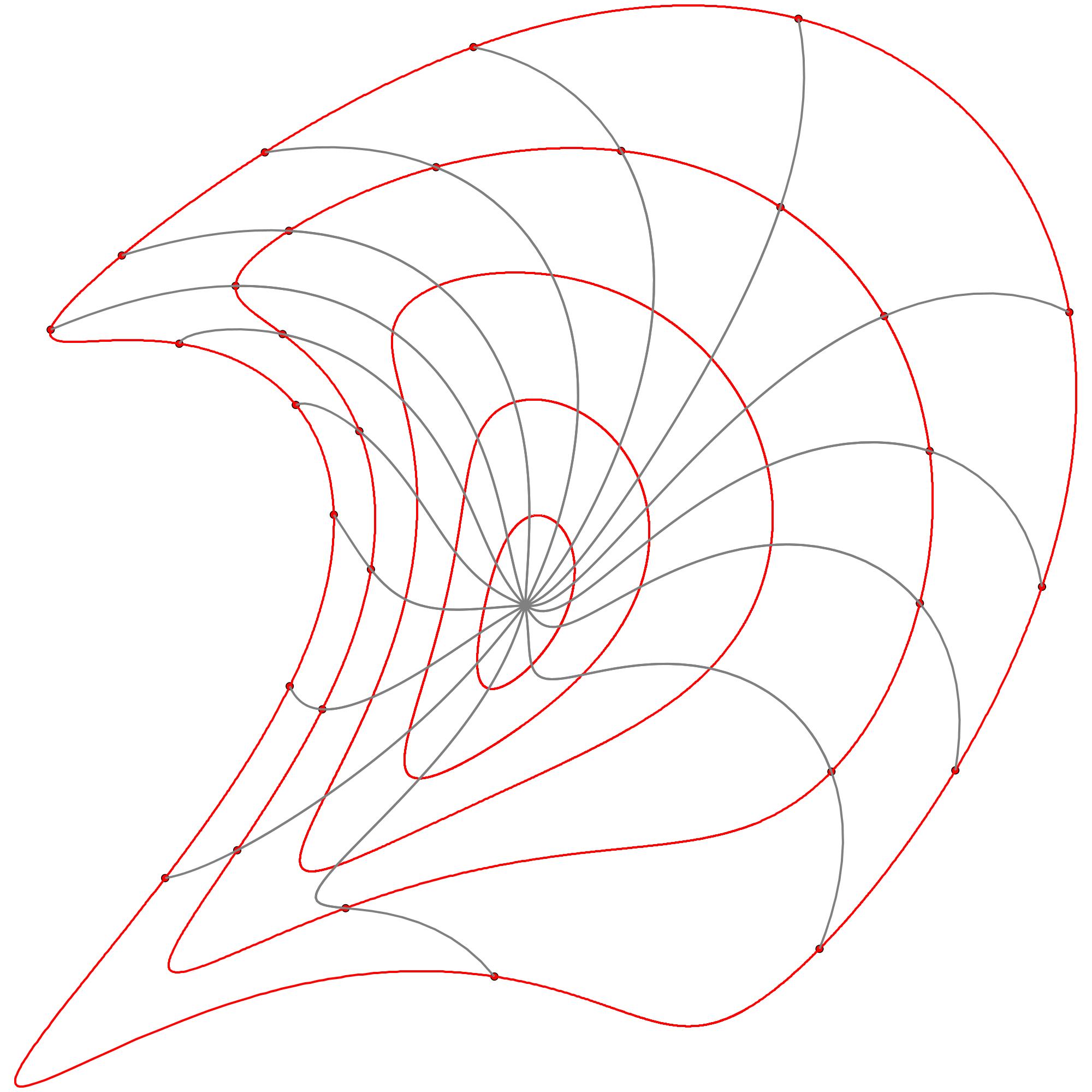}}
\begin{center}
\caption{{The WF net frontals and rays for the rheonomic field with Zermelo data \eqref{eqZermelo1} ignited at time $0$ at the point $(0,0)$ and running until time $T=16$ in steps of $0.2\cdot 16$.}} \label{figSolutionNet3}
\end{center}
\end{figure}

\begin{figure}[h]
\centerline{
\includegraphics[width=60mm]{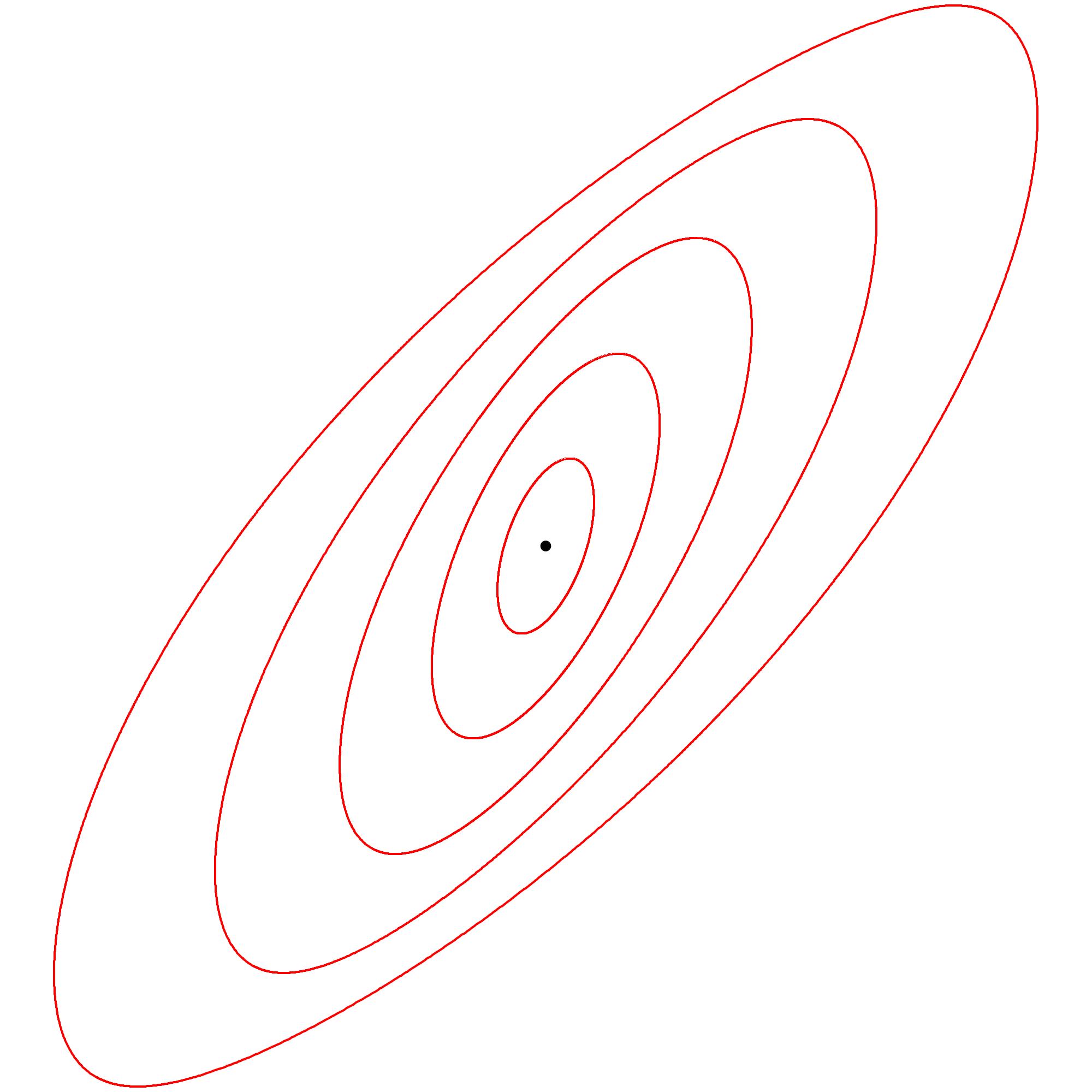} \quad \includegraphics[width=60mm]{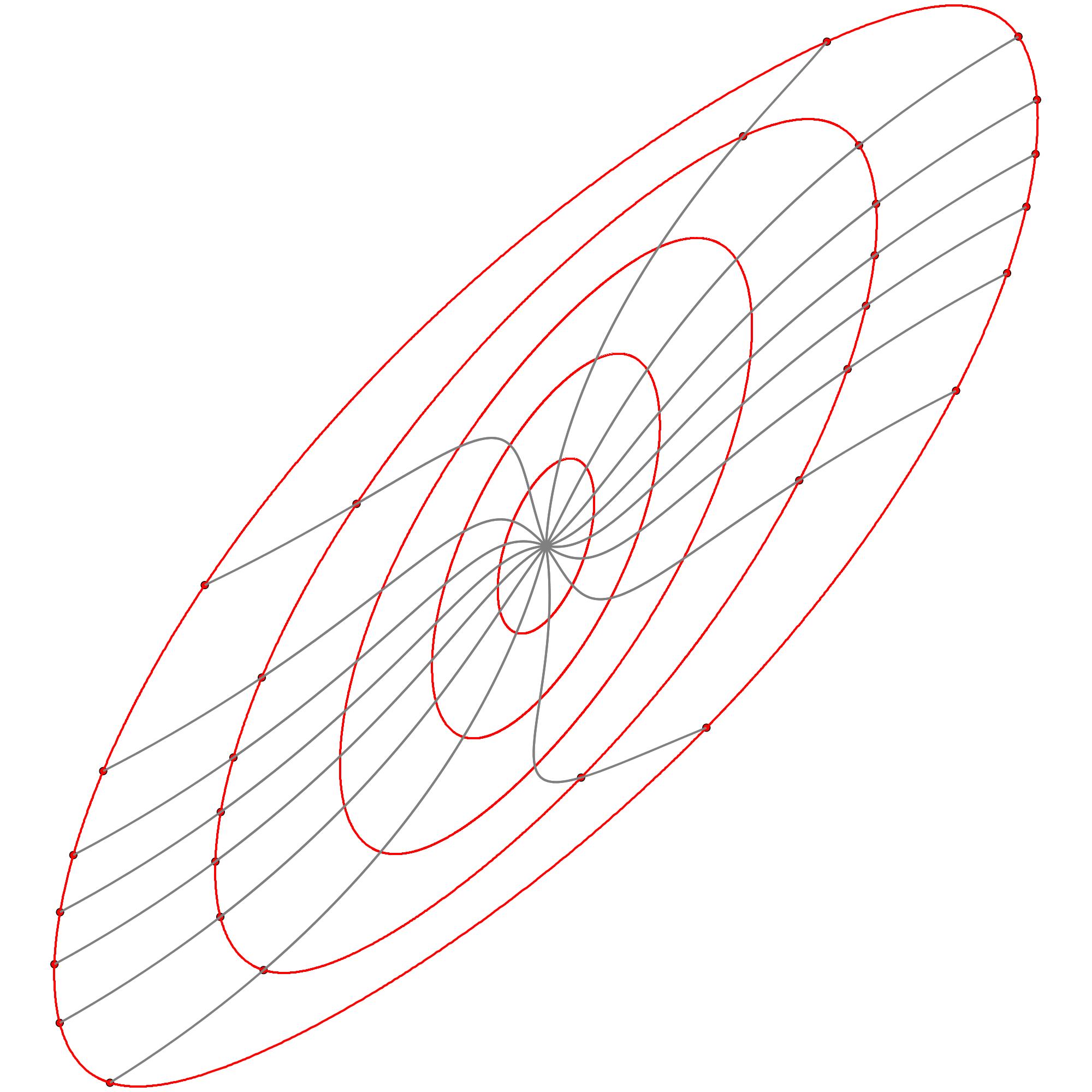}}
\begin{center}
\caption{{The WF net frontals and rays for the rheonomic field with Zermelo data \eqref{eqZermelo2} ignited at time $0$ at the point $(0,0)$ and running until time $T=16$ in steps of $0.2\cdot 16$.}} \label{figSolutionNet4}
\end{center}
\end{figure}

\begin{figure}[h]
\centerline{
\includegraphics[width=60mm]{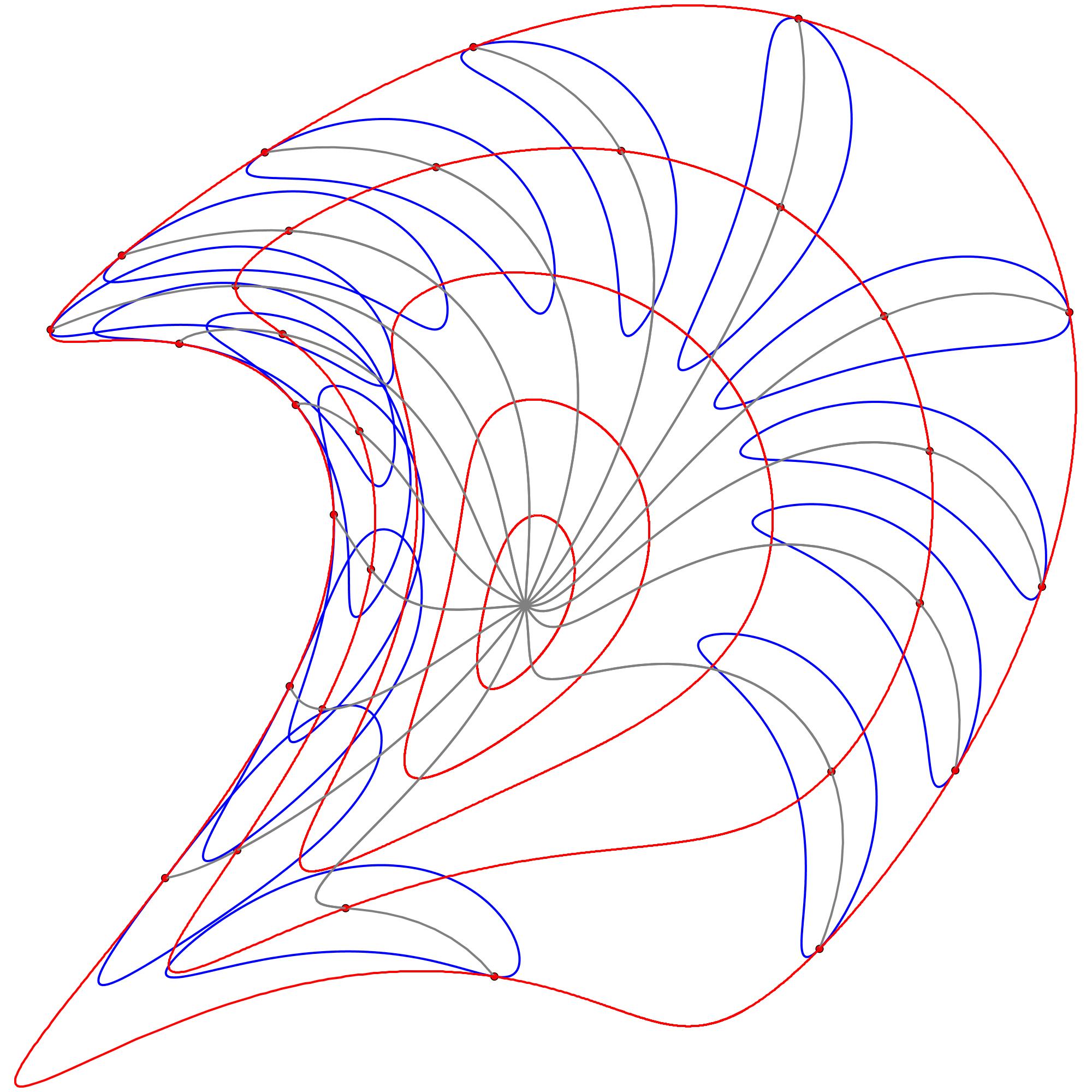} \quad \includegraphics[width=60mm]{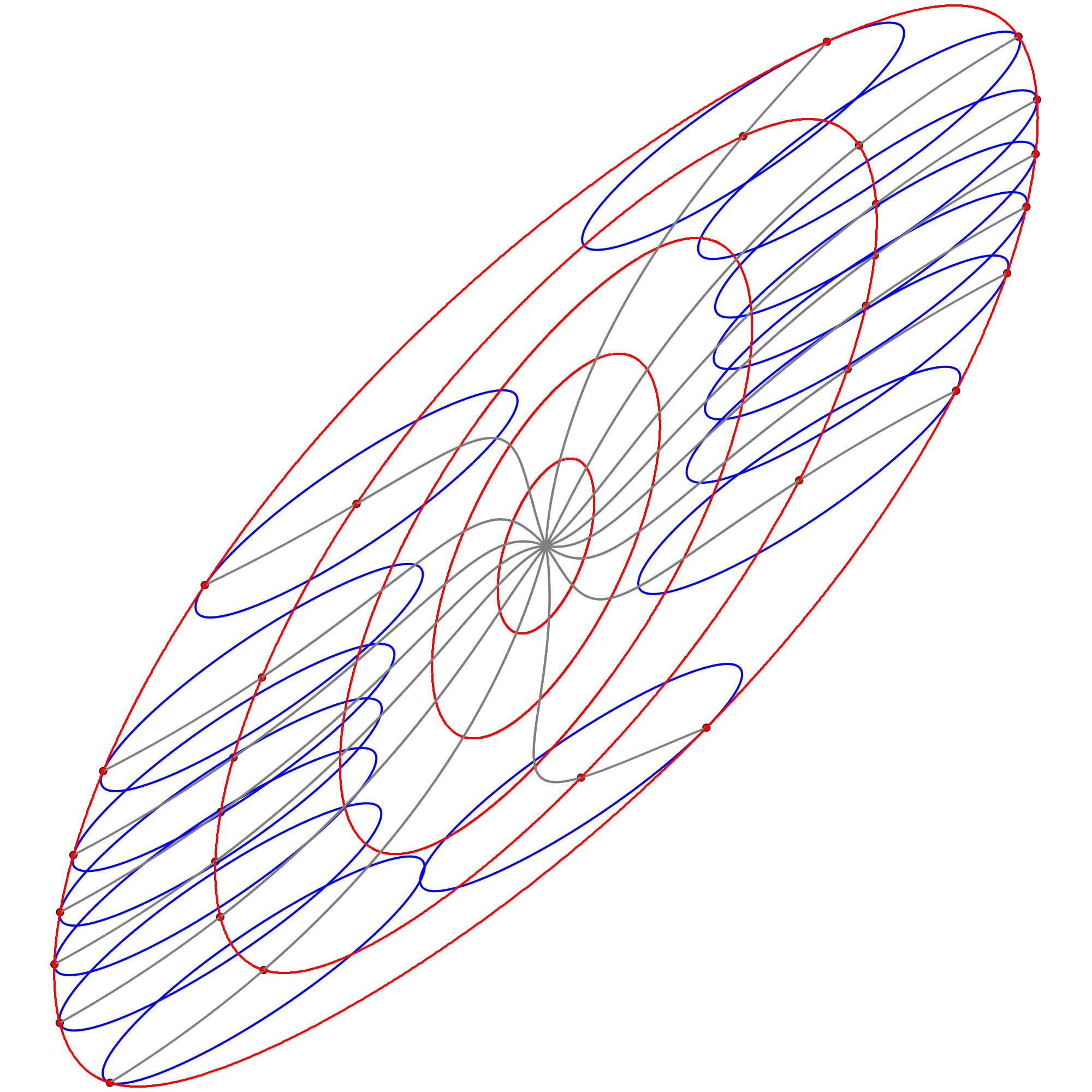}}
\begin{center}
\caption{{The Huyghens droplets of duration $0.2\cdot 16$ from the frontal $\eta_{0.8\cdot 16}$ seem to envelope the outermost frontal $\eta_{16}$ in both cases of Zermelo data \eqref{eqZermelo1} and \eqref{eqZermelo2} from example \ref{exampZermeloSimp}.}} \label{figHuyghens34}
\end{center}
\end{figure}


\vspace{2cm}

\bibliographystyle{plain}


\def\cprime{$'$} \def\cprime{$'$}


\end{document}